\newtheorem{prop}{Proposition}
\newtheorem{thm}{Theorem}
\newtheorem{coro}{Corollary}
\newcommand{\tr}{{\rm Tr}}
\newcommand{\st}{{\rm s.t.}}
\newcommand{\re}{\mathbb{R}}
\newcommand{\bv}{{\textbf{v}}}
\newcommand{\bV}{{\textbf{V}}}
\newcommand{\bH}{{\mathbf{H}}}
\newcommand{\bs}{\mathbf{s}}
\newcommand{\bx}{\mathbf{x}}
\newcommand{\by}{\mathbf{y}}
\newcommand{\bI}{\mathbf{I}}
\newcommand{\bn}{\mathbf{n}}
\newcommand{\bE}{\mathbf{E}}
\newcommand{\bU}{\mathbf{U}}
\newcommand{\bu}{\mathbf{u}}
\newtheorem{assumption}{Assumption}
\begin{document}
%
\title{A Unified Convergence Analysis of Block Successive Minimization Methods for Nonsmooth Optimization}

%

\author{Meisam Razaviyayn, Mingyi Hong and Zhi-Quan Luo
\thanks{The authors are with the Department of Electrical and Computer Engineering, University of Minnesota, 200 Union
Street SE, Minneapolis, MN 55455. Emails: \{meisam,mhong,luozq\}@ece.umn.edu.}}

%
\maketitle
%
\begin{abstract}
The block coordinate descent (BCD) method is widely used for
minimizing a continuous function $f$ of several block variables. At each
iteration of this method, a single block of variables is optimized,
while the remaining variables are held fixed. To ensure the
convergence of the BCD method, the subproblem to be optimized in
each iteration needs to be solved {\it exactly} to its {\it unique}
optimal solution. Unfortunately, these requirements are often too
restrictive for many practical scenarios. In this paper, we study an
alternative inexact BCD approach which updates the variable blocks
by successively minimizing a sequence of approximations of $f$ which are
either locally tight upper bounds of $f$ or strictly convex local approximations
of $f$. We focus on characterizing the
convergence properties for a fairly wide class of such methods,
especially for the cases where the objective functions are either
non-differentiable or nonconvex. Our results unify and extend the
existing convergence results for many classical algorithms such as the BCD method, the
difference of convex functions (DC) method, the expectation maximization
(EM) algorithm, as well as the alternating proximal minimization algorithm.
\end{abstract}

\begin{keywords}
Block Coordinate Descent, Block Successive Upper-bound Minimization, Successive Convex Approximation, Successive Inner Approximation
\end{keywords}


\newpage

\section{Introduction}
\label{sec:intro}
Consider the following optimization problem
\begin{align}
\min \quad &f(x_1,\ldots,x_n)\nonumber\\
\st \quad &x_i \in \mathcal{X}_i, \; i=1,2,\ldots, n,\nonumber
\end{align}
where $\mathcal{X}_i \subseteq  \re^{m_i}$ is a closed convex set,
and $f: \prod_{i=1}^{n}\mathcal{X}_i\to\mathbb{R}$ is a continuous
function. A popular approach for solving the above optimization
problem is the block coordinate descent method (BCD), which is also
known as the Gauss-Seidel method. At each iteration of this method,
the function is minimized with respect to a single block of
variables while the rest of the blocks are held fixed. More
specifically, at iteration $r$ of the algorithm, the block variable
$x_i$ is updated by solving the following subproblem
\begin{align}
x_i^r = \arg \min_{y_i \in \mathcal{X}_i} \quad
f(x_1^r,\ldots,x_{i-1}^r,y_i,x_{i+1}^{r-1},\ldots,x_n^{r-1}),\quad
i=1,2,\ldots,n \label{EQ:BCD}.
\end{align}
 Let us use $\{x^r\}$ to denote
the sequence of iterates generated by this algorithm, where $x^r
\triangleq (x_1^r,\ldots,x_n^r)$.  Due to its particular simple
implementation, the BCD method has been widely used for solving
problems such as power allocation in wireless communication systems
\cite{WMMSETSP}, clustering \cite{HartiganKmeans}, image denoising
and image reconstruction \cite{Imagereconstruction} and
dynamic programming \cite{DynamicProgramming}. \\

Convergence of the BCD method typically requires the uniqueness of
the minimizer at each step or the quasi-convexity of the objective
function (see \cite{BCDTseng} and the references therein). Without
these assumptions, it is possible that the BCD iterates do not get
close to any of the stationary points of the problem (see Powell
\cite{PowellBCD73} for examples). Unfortunately, these requirements
can be quite restrictive in some important practical
problems such the tensor decomposition problem (see
\cite{TensorReviewKolda} and the application section in this work)
and the sum rate maximization problem in wireless networks. In fact,
for the latter case, even solving the per block
subproblem~\eqref{EQ:BCD} is difficult due to the non-convexity and
non-differentiability of the objective function.

To overcome such difficulties, one can modify the BCD algorithm by
optimizing a well-chosen {\it approximate} version of the objective
function at each iteration. The classical gradient descent method,
for example, can be viewed as an implementation of such strategy. To
illustrate, recall that the update rule of the gradient descent
method is given by
\[
x^{r+1} = x^r - \alpha^{r+1} \nabla f(x^r).
\]
This update rule is equivalent to solving the following problem
\[
x^{r+1} = \arg \min_{x} \quad g(x,x^r),
\]
where
\[
g(x,x^r) \triangleq f(x^r) + \nabla f(x^r)(x-x^r) +
\frac{1}{2\alpha^{r+1}} \|x-x^r\|^2.
\]
Clearly, the function $g(x,x^r)$ is an approximation of $f(\cdot)$
around the point $x^r$. In fact, as we will see later in this paper,
successively optimizing an approximate version of the original
objective is the key idea of many important algorithms such as the
concave-convex procedure \cite{CCCP}, the EM algorithm
\cite{EMDempster}, the proximal minimization algorithm
\cite{Bertsekas_Book_Nonlinear}, to name a few. Furthermore, this
idea can be used to simplify the computation and to guarantee the
convergence of the original BCD algorithm with the Gauss-Seidel
update rule (e.g. \cite{CGDTseng}, \cite{ProximalBCD},\cite{WataoYinBCD}). However,
despite its wide applicability, there appears to be no general unifying
convergence analysis for this class of algorithms.

In this paper, we provide a unified convergence analysis for a general class
of inexact BCD methods in which a sequence of approximate versions of
the original problem are solved successively. Our focus will be on
problems with nonsmooth and nonconvex objective functions. Two types of approximations are considered: one being a locally tight upper bound for the original objective function, the other being a convex local approximation of the objective function. We provide convergence analysis for both of these successive approximation strategies as well as for various types of updating rules, including the cyclic updating rule, the Gauss-Southwell update rule or the overlapping essentially cyclic update rule. By allowing inexact solution of subproblems,
our work unifies and extends several existing algorithms and their convergence analysis, including the difference of convex functions (DC)
method, the expectation maximization (EM) algorithm, as well as the
alternating proximal minimization algorithm.


\section{Technical Preliminaries}
Throughout the paper, we adopt the following notations. We use
$\mathbb{R}^m$ to denote the space of~$m$ dimensional real valued
vectors, which is also represented as the Cartesian product
of~$n$ smaller real valued vector spaces, i.e.,
\[
\re^{m} = \re^{m_1} \times \re^{m_2} \times \ldots \times \re^{m_n},
\]
where $\sum_{i=1}^{n} m_i = m$. We use the notation
$(0,\ldots,d_k,\ldots,0)$ to denote the vector of all zeros except
the $k$-th block, with $d_k \in \re^{m_k}$. The following
concepts/definitions are adopted in our paper:
\begin{itemize}
\item \textbf{Distance of a point from a set:} Let~$\mathcal{S} \subseteq \re^m$ be a set and $x$ be a point in~$\mathbb{R}^m$, the distance of the point~$x$ from the set~$\mathcal{S}$ is defined as
\[
d(x,\mathcal{S}) = \inf_{s \in \mathcal{S}} \|x-s\|,
\]
where~$\|\cdot\|$ denotes the 2-norm in~$\mathbb{R}^m$.

\item \textbf{Directional derivative:} Let $f: \mathcal{D} \rightarrow \re $ be a function where~$\mathcal{D} \subseteq \re^m$ is a convex set. The directional derivative of~$f$ at point~$x$ in direction~$d$ is defined by
\[
f'(x;d) \triangleq \liminf_{\lambda \downarrow 0} \frac{f(x+\lambda d) - f(x)}{ \lambda}.
\]

\item \textbf{Stationary points of a function:} Let $f: \mathcal{D} \rightarrow \re $ be a function
where~$\mathcal{D} \subseteq \re^m$ is a convex set.  The point~$x$
is a stationary point of~$f(\cdot)$ if $f'(x;d) \geq 0$ for all $d$
such that $x + d \in \mathcal{D}$. In this paper we use the notation
$\mathcal{X}^*$ to denote the set of stationary points of a
function.

\item \textbf{Regularity of a function at a point:} The function~$f:\re^m \rightarrow \re$ is
regular at the point~$z \in {\rm dom} f$ with respect to the
coordinates~$m_1,m_2,\ldots,m_n$,  $m_1 + m_2 + \ldots +  m_n = m$,
if~$f'(z;d)\geq 0$ for all $d = (d_1,d_2,\ldots, d_n)$ with $
f'(z;d_k^0)\geq 0$, where $d_k^0 = (0\ldots,d_k,\ldots, 0)$ and $d_k
\in \re^{m_k}, \forall\; k$. For detailed discussion on the regularity
of a function, the readers are referred to~\cite[Lemma
3.1]{BCDTseng}.

\item \textbf{Quasi-convex function:} The function~$f$ is quasi-convex if
\[
f(\theta x + (1-\theta)y) \leq \max \{f(x),f(y)\}, \quad \forall\; \theta \in (0,1), \ \forall\; x,y \in \;{\rm dom} \;f
\]

\item \textbf{Coordinatewise minimum of a function:} $z \in {\rm dom} \; f \subseteq \re^{m}$ is coordinatewise minimum of~$f$ with respect to the coordinates in $\Re^{m_1},\Re^{m_2},\ldots, \Re^{m_n}$, $m_1 + \ldots + m_k = m$ if
     \[
     f(z + (0,\ldots,d_k,\ldots, 0)) \geq f(z),\quad \forall\;  d_k \in \re^{m_k} \quad {\rm with} \quad z +  (0,\ldots,d_k,\ldots, 0) \in \;{\rm dom}\; f.
     \]
\end{itemize}

\section{Successive Upper-bound Minimization (SUM)}
\label{sec:SUM}

To gain some insights to the general inexact BCD method, let us
first consider a simple Successive Upper-bound
Minimization (SUM) approach in which all the variables are grouped
into a {\it single} block. Although simple in form, the SUM
algorithm is the key to many important algorithms such as the DC
programming \cite{CCCP} and the EM algorithm \cite{EMTutorial}.

Consider the following optimization problem
\begin{equation}
\label{eq:OriginalProblem}
\begin{split}
\min \quad & f(x) \\
\st \quad & x \in \mathcal{X},
\end{split}
\end{equation}
where $\mathcal{X}$ is a closed convex set. Without loss of generality, we can
 assume that ${\rm dom} \; f = \mathcal{X}$.
When the objective function $f(\cdot)$ is non-convex and/or
nonsmooth, solving \eqref{eq:OriginalProblem} directly may not be
easy. The SUM algorithm circumvents such difficulty by optimizing a
sequence of approximate objective functions instead. More
specifically, starting from a feasible point $x^0$, the algorithm
generates a sequence $\{x^r\}$ according to the following update
rule
\begin{equation}\label{eq:SUMupdaterule}
x^r \in \arg \min_{x \in \mathcal{X}} \quad  u (x,x^{r-1})
\end{equation}
where $x^{r-1}$ is the point generated by the algorithm at
iteration~$r-1$ and $u(x,x^{r-1})$ is an approximation of~$f(x)$ at the
$r$-th iteration. Typically the approximate function
$u(\cdot,\cdot)$ needs to be chosen such that the subproblem
\eqref{eq:SUMupdaterule} is easy to solve. Moreover, to ensure the
convergence of the SUM algorithm, certain regularity conditions on
$u(\cdot,\cdot)$ is required (which will be discussed shortly).
Among others, $u(x,x^{r-1})$ needs to be a global upper bound for
$f(x)$, hence the name of the algorithm. The main steps of the SUM
algorithm are presented in Fig.~\ref{Fig:SUMAlgorithm}.
\begin{figure}[t]
\centering
\begin{tabular}{|p{3.3in}|}
\hline
\begin{itemize}
\item [1] \;Find a feasible point $x^0\in \mathcal{X}$ and set $r = 0$
\item [2] \; \textbf{repeat}
\item [3] \quad $r = r+1$
\item [4] \quad  Let $\mathcal{X}^{r} = \arg \min_{x\in \mathcal{X}} u(x , x^{r-1})$
\item [5] \quad Set $x^r$ to be an arbitrary element in~$\mathcal{X}^{r}$
\item [6] \; \textbf{until} some convergence criterion is met
\end{itemize}
\\
\hline
\end{tabular}\vspace{1.2em}
\caption{Pseudo code of the SUM algorithm} \label{Fig:SUMAlgorithm}
\vspace{-0.5cm}
\end{figure}

We remark that the proposed SUM algorithm is in many ways similar to
the inner approximation algorithm (IAA) developed
in~\cite{SCAMarksWright}, with the following key differences:
\begin{itemize}
\item The IAA algorithm approximates {\it both} the objective functions
and the feasible sets. On the contrary, the SUM algorithm only
approximates the objective function.

\item The the IAA algorithm is only applicable for problems with smooth objectives, while the
SUM algorithm is able to handle nonsmooth objectives as well.
\end{itemize}

It is worth mentioning that the existing convergence result for the
IAA algorithm is quite weak. In particular, \cite[Theorem
1]{SCAMarksWright} states that if the whole sequence converges, then
the algorithm should converge to a stationary point. In the
following, we show that the SUM algorithm provides stronger
convergence guarantees as long as the
approximation function $u(\cdot, \cdot)$ satisfies certain mild assumptions\footnote{These assumptions are weaker
than those made to ensure the convergence of the IAA
algorithm. } which we outline below.

\begin{assumption}
\label{AssumptionA}
Let the approximation function $u(\cdot,\cdot)$ satisfy the following
\begin{align}
& u(y, y) = f(y),\quad \forall\; y \in \mathcal{X} \label{A1}\tag{A1} \\
& u(x,y) \geq f(x),\quad \forall\; x,y \in \mathcal{X}\label{A2} \tag{A2}\\
&  u'(x,y;d)\bigg|_{x=y} =  f' (y;d), \quad \forall\; d \;\;{\rm with} \;\; y + d \in \mathcal{X} \label{A3}\tag{A3}\\
& u(x,y) \; {\rm is} \; {\rm continuous} \; {\rm in} \; (x,y) \tag{A4}
\end{align}
\end{assumption}

The assumptions \eqref{A1} and \eqref{A2} imply that the approximate
function $u(\cdot,x^{r-1})$ in \eqref{eq:SUMupdaterule} is a tight
upper bound of the original function. The assumption~\eqref{A3}
guarantees that the first order behavior of~$u(\cdot,x^{r-1})$ is
the same as~$f(\cdot)$ locally (note that the directional derivative
$u'(x,y;d)$ is only with respect to the variable $x$). Although
directly checking \eqref{A3} may not be easy, the following
proposition provides a sufficient condition under which \eqref{A3}
 holds true automatically.

\begin{prop} \label{lemma:SUMAssumption}
Assume $f(x) = f_0(x) + f_1(x)$, where $f_0(\cdot)$ is continuously differentiable and  the directional derivative of $f_1(\cdot)$ exists at every point~$x\in \mathcal{X}$. Consider $u(x,y) = u_0(x,y) + f_1(x)$, where $u_0(x,y)$ is a continuously differentiable function satisfying the following conditions
\begin{align}
& u_0(y , y) = f_0(y),\quad \forall\; y \in \mathcal{X} \label{TightnessCondLemma}\\
& u_0(x,y) \geq f_0(x),\quad \forall\; x,y \in \mathcal{X}.  \label{UpperBoundCondLemma}
\end{align}
Then, \eqref{A1}, \eqref{A2} and \eqref{A3} hold for
$u(\cdot,\cdot)$.
\end{prop}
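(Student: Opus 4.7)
Claims (A1) and (A2) are immediate by substitution: $u(y,y) = u_0(y,y) + f_1(y) = f_0(y) + f_1(y) = f(y)$ uses \eqref{TightnessCondLemma}, and $u(x,y) = u_0(x,y) + f_1(x) \geq f_0(x) + f_1(x) = f(x)$ uses \eqref{UpperBoundCondLemma}. Since $u_0$ and $f_0$ are $C^1$, their directional derivatives reduce to inner products with gradients, and the shared $f_1'(y;d)$ term cancels between the two sides of (A3). Hence proving (A3) reduces to showing
\[
\nabla_x u_0(y,y)^T d \;=\; \nabla f_0(y)^T d, \qquad \forall\, d \text{ with } y + d \in \mathcal{X}.
\]

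The inequality $\geq$ falls out quickly. By convexity of $\mathcal{X}$, $y+\lambda d \in \mathcal{X}$ for $\lambda\in[0,1]$, so $\varphi(\lambda) := u_0(y+\lambda d,\, y) - f_0(y+\lambda d)$ is $C^1$, nonnegative on $[0,1]$, and vanishes at $\lambda=0$; its right derivative at $0$, namely $(\nabla_x u_0(y,y)-\nabla f_0(y))^T d$, must therefore be nonnegative.

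The main obstacle is the reverse inequality, since $-d$ need not be feasible. My plan is to bring in the second argument of $u_0$ as well. Combining the two hypotheses gives $u_0(x_0,y) \geq f_0(x_0) = u_0(x_0,x_0)$ for every fixed $x_0\in\mathcal{X}$, so the map $y\mapsto u_0(x_0,y)$ is minimized at $y=x_0$ over $\mathcal{X}$; applying the same one-sided first-order argument in this second slot yields $\nabla_y u_0(y,y)^T d \geq 0$ for each feasible $d$. Independently, \eqref{TightnessCondLemma} furnishes the diagonal identity $u_0(y+\lambda d,\, y+\lambda d) = f_0(y+\lambda d)$ for $\lambda\in[0,1]$, and the chain rule produces
\[
\nabla_x u_0(y,y)^T d + \nabla_y u_0(y,y)^T d \;=\; \nabla f_0(y)^T d.
\]
Substituting $\nabla_y u_0(y,y)^T d \geq 0$ into this identity gives $\nabla_x u_0(y,y)^T d \leq \nabla f_0(y)^T d$, which together with the earlier $\geq$ yields the desired equality and finishes (A3).
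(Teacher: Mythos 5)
Your proof is correct, and for the key claim \eqref{A3} it takes a genuinely different route from the paper's. The paper argues by contradiction: supposing $u_0'(x,y;d)|_{x=y}\neq f_0'(y;d)$, it uses continuity of the gradients to move to a point $z=y+\alpha d$ in the relative interior of the segment $[y,y+d]$, where both $z+\lambda d$ and $z-\lambda d$ are feasible for small $\lambda>0$; the forward difference quotient then gives $u_0'(x,z;d)|_{x=z}\ge f_0'(z;d)$ and the backward one gives the reverse inequality, contradicting the strict inequality at $z$. Your argument is direct and stays at the point $y$ itself: you obtain the inequality $\nabla_x u_0(y,y)^T d\ge \nabla f_0(y)^T d$ exactly as the paper does (from \eqref{TightnessCondLemma}--\eqref{UpperBoundCondLemma} along the feasible ray), but you get the reverse inequality by observing that $u_0(x_0,\cdot)$ is minimized over $\mathcal{X}$ at $x_0$, hence $\nabla_y u_0(y,y)^T d\ge 0$, and combining this with the chain-rule identity $\nabla_x u_0(y,y)^T d+\nabla_y u_0(y,y)^T d=\nabla f_0(y)^T d$ obtained by differentiating the diagonal relation $u_0(w,w)=f_0(w)$ along the segment. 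The trade-off: the paper's proof uses only the first-argument gradient of $u_0$ (plus its joint continuity) and never differentiates in the second slot, whereas yours genuinely needs $u_0$ to be differentiable in $y$ as well---which is covered by the hypothesis that $u_0$ is continuously differentiable, so no gap---and in exchange avoids the contradiction set-up and the perturbation to a nearby point, yielding the identity at $y$ directly together with the structurally informative relation between $\nabla_x u_0$, $\nabla_y u_0$, and $\nabla f_0$ on the diagonal. Both arguments rely on the same reduction of \eqref{A3} to the smooth parts via additivity of directional derivatives, which is justified since $f_0$ and $u_0$ are differentiable and $f_1'(y;d)$ exists.
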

\begin{proof}
First of all, \eqref{TightnessCondLemma} and \eqref{UpperBoundCondLemma} imply \eqref{A1} and \eqref{A2} immediately. Now we prove \eqref{A3} by contradiction. Assume the contrary so that there exist a $y\in \mathcal{X}$ and a $d \in \re^m$ so that
\begin{equation}\label{ContraryLemma}
 f'(y;d) \neq  u'(x,y;d)\bigg|_{x=y} \quad {\rm and} \quad y+d \in \mathcal{X}.
\end{equation}
This further implies that
\[
f_0'(y;d) \neq  u_0'(x,y;d)\bigg|_{x=y}.
\]
Furthermore, since $f_0(\cdot)$ and $u_0(\cdot,\cdot)$ are continuously differentiable, there exists a $\alpha>0$ such that for $z = y+\alpha d$,
\begin{equation}\label{eq:lemmatemp1}
f'_0 (z; d) \neq u'_0 (x,z; d)\bigg|_{x=z}.
\end{equation}
The assumptions \eqref{TightnessCondLemma} and \eqref{UpperBoundCondLemma} imply that
\begin{equation}\label{eq:lemmatemp2}
\begin{split}
u'_0 (x,z; d)\bigg|_{x=z} &= \lim_{\lambda \downarrow 0} \frac{u_0(z + \lambda d ,z) - u_0(z,z)}{\lambda}\\
& \geq \lim_{\lambda \downarrow 0} \frac{f_0(z + \lambda d) - f_0(z)}{\lambda} = f'(z;d).
\end{split}
\end{equation}
On the other hand, the differentiability of  $f_0(\cdot)$, $u_0(\cdot,\cdot)$ and using \eqref{TightnessCondLemma}, \eqref{UpperBoundCondLemma} imply
\begin{equation}\label{eq:lemmatemp3}
\begin{split}
u'_0 (x,z; d)\bigg|_{x=z} &= \lim_{\lambda \downarrow 0} \frac{u_0(z ,z) - u_0(z- \lambda d ,z)}{\lambda}\\
& \leq \lim_{\lambda \downarrow 0} \frac{f_0(z)  - f_0(z - \lambda d)}{\lambda} = f'(z;d).
\end{split}
\end{equation}
Clearly, \eqref{eq:lemmatemp2} and \eqref{eq:lemmatemp3} imply that $u'_0 (x,z; d)\bigg|_{x=z} = f'(z;d)$ which contradicts  \eqref{eq:lemmatemp1}.
\end{proof}

The following theorem establishes the convergence for the SUM
algorithm.

\begin{thm}\label{thm:ConvergenceSUMConvex}
Assume that Assumption~\ref{AssumptionA} is satisfied. Then every
limit point of the  iterates generated by the SUM algorithm is a
stationary point of the problem \eqref{eq:OriginalProblem}.
\end{thm}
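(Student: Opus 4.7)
The plan is to chain together the four assumptions into a short monotonicity-plus-limiting argument. First I would establish that the sequence $\{f(x^r)\}$ is monotonically nonincreasing. Indeed, using in turn (A2), the defining minimization property of $x^r$, and (A1),
\[
f(x^r) \leq u(x^r, x^{r-1}) \leq u(x^{r-1}, x^{r-1}) = f(x^{r-1}).
\]
Let $x^*$ be any limit point of $\{x^r\}$, and pick a subsequence $\{x^{r_j}\}$ converging to $x^*$. Since $\mathcal{X}$ is closed, $x^* \in \mathcal{X}$, and by continuity of $f$ we have $f(x^{r_j}) \to f(x^*)$; combined with monotonicity, the entire sequence $\{f(x^r)\}$ converges to $f(x^*)$.

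Next I would show that $x^*$ is actually a global minimizer of the surrogate $u(\cdot, x^*)$ on $\mathcal{X}$. From the two-sided sandwich
\[
f(x^{r_j+1}) \;\leq\; u(x^{r_j+1}, x^{r_j}) \;\leq\; u(x, x^{r_j}), \qquad \forall\, x \in \mathcal{X},
\]
setting $x = x^*$ and letting $j \to \infty$: the left side tends to $f(x^*)$ (since $\{f(x^r)\}$ converges to $f(x^*)$), and the right side tends to $u(x^*, x^*) = f(x^*)$ by (A4) and (A1). Hence $u(x^{r_j+1}, x^{r_j}) \to f(x^*)$, and in the same display, passing to the limit in $j$ for arbitrary fixed $x \in \mathcal{X}$ using (A4) gives
\[
f(x^*) = u(x^*, x^*) \;\leq\; u(x, x^*), \qquad \forall\, x \in \mathcal{X}.
\]

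Finally, since $x^*$ minimizes $u(\cdot, x^*)$ over the convex set $\mathcal{X}$, the standard first-order optimality condition for directional derivatives yields $u'(x, x^*; d)\big|_{x=x^*} \geq 0$ for every $d$ with $x^* + d \in \mathcal{X}$. Applying (A3) converts this into $f'(x^*; d) \geq 0$ for every such $d$, i.e.\ $x^*$ is a stationary point of \eqref{eq:OriginalProblem}. The only mildly delicate step is the sandwich limit argument, because it requires continuity in \emph{both} arguments of $u$ (supplied by (A4)) together with the fact that $\{f(x^r)\}$ — not just its subsequence — converges, which is why the monotonicity observation must come first.
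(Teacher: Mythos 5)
Your proposal is correct and follows essentially the same route as the paper: monotone decrease of $f(x^r)$ via (A1)--(A2) and the minimization step, the sandwich inequality $f(x^{r_j+1}) \leq u(x^{r_j+1},x^{r_j}) \leq u(x,x^{r_j})$, passage to the limit using (A4) to conclude $u(x^*,x^*) \leq u(x,x^*)$, and then first-order optimality plus (A3) to get stationarity. The only cosmetic difference is that you invoke convergence of the full sequence $\{f(x^r)\}$ to handle the left-hand limit, where the paper writes the same limit through the chain $u(x^{r_{j+1}},x^{r_{j+1}}) = f(x^{r_{j+1}}) \leq f(x^{r_j+1})$; both are valid and equivalent.
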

\begin{proof}
Firstly, we observe the following series of inequalities
\begin{equation}\label{eq:decreasingobjval}
f(x^{r+1}) \stackrel{\rm (i)}\leq u(x^{r+1},x^{r}) \stackrel{\rm
(ii)}\leq u(x^r,x^r) = f(x^r),\quad \forall\; r=0,1,2,\ldots
\end{equation}
where step ${\rm (i)}$ is due to \eqref{A1}, step ${\rm (ii)}$ follows from the
optimality of $x^{t+1}$ (cf.\ step 4 and 5 in
Fig.\ref{Fig:SUMAlgorithm}), and the last equality is due to
\eqref{A2}. A straightforward consequence of
\eqref{eq:decreasingobjval} is that the sequence of the objective
function values are non-increasing, that is
\begin{equation}\label{eq:decreasingobjvalf}
f(x^0) \geq f(x^1) \geq f(x^2) \geq \ldots
\end{equation}

Assume that there exists a subsequence $\{x^{r_j}\}$ converging to a
limit point~$z$. Then Assumptions \eqref{A1}, \eqref{A2} together
with \eqref{eq:decreasingobjvalf} imply that
\begin{align}
u(x^{r_{j + 1}},x^{r_{j+1}}) =  f(x^{r_{j+1}}) \leq f(x^{r_j + 1}) \leq u(x^{r_j +1} , x^{r_j}) \leq u(x , x^{r_j}), \quad \forall\; x \in \mathcal{X} \nonumber
\end{align}
Letting $j \rightarrow \infty$, we obtain
\[
u(z, z) \leq u(x,z), \quad \forall\; x \in \mathcal{X},
\]
which implies
\[
u'(x,z;d) \bigg|_{x = z} \geq  0, \quad\forall\; d\in \re^m \;\;{\rm with}\;\; z+ d \in \mathcal{X}.
\]
Combining with \eqref{A3}, we obtain
\[
f'(z;d) \geq 0,  \quad \forall\; d\in \re^m \;\;{\rm with}\; \;z+ d \in \mathcal{X},
\]
implying that $z$ is a stationary point of~$f(\cdot)$.
\end{proof}
\begin{coro}
Assume that the level set $\mathcal{X}^0 = \{x \mid f(x) \leq f(x^0)\}$ is compact and Assumption~\ref{AssumptionA} holds. Then, the sequence of iterates~$\{x^r\}$ generated by the SUM algorithm satisfy
\[
\lim_{r \rightarrow \infty} \quad d(x^r , \mathcal{X}^*) = 0,
\]
where~$\mathcal{X}^*$ is the set of stationary points of~\eqref{eq:OriginalProblem}.
\end{coro}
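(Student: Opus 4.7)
The plan is to argue by contradiction using the boundedness of the iterates and the conclusion of Theorem~\ref{thm:ConvergenceSUMConvex}.

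First, I would observe that the iterates never leave the level set $\mathcal{X}^0$. Indeed, inequality~\eqref{eq:decreasingobjvalf} in the proof of Theorem~\ref{thm:ConvergenceSUMConvex} shows that $f(x^r) \leq f(x^0)$ for all $r$, so $\{x^r\} \subseteq \mathcal{X}^0$. Since $\mathcal{X}^0$ is compact by assumption, the sequence $\{x^r\}$ is bounded.

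Next, I would suppose for contradiction that $d(x^r,\mathcal{X}^*)$ does not converge to~$0$. Then there exist $\varepsilon > 0$ and a subsequence $\{x^{r_k}\}$ with
\[
d(x^{r_k},\mathcal{X}^*) \geq \varepsilon,\quad \forall\; k.
\]
Because $\{x^{r_k}\}$ lies in the compact set $\mathcal{X}^0$, it has a further subsequence $\{x^{r_{k_j}}\}$ converging to some limit point $z \in \mathcal{X}^0$. By Theorem~\ref{thm:ConvergenceSUMConvex}, every limit point of the SUM iterates is a stationary point, so $z \in \mathcal{X}^*$. But then
\[
d(x^{r_{k_j}},\mathcal{X}^*) \leq \|x^{r_{k_j}} - z\| \to 0,
\]
which contradicts $d(x^{r_{k_j}},\mathcal{X}^*) \geq \varepsilon$. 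Hence $d(x^r,\mathcal{X}^*)\to 0$.

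There is no real obstacle in this argument; it is a standard consequence of compactness plus the limit-point characterization already established in Theorem~\ref{thm:ConvergenceSUMConvex}. The only minor point to be careful about is that $\mathcal{X}^*$ need not be assumed closed a priori, but this is harmless because $d(x,\mathcal{X}^*) = d(x,\overline{\mathcal{X}^*})$ and the limit $z$ produced above lies in $\mathcal{X}^*$ itself, so the contradiction goes through directly.
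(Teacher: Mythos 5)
Your proof is correct and follows essentially the same contradiction argument as the paper: extract a subsequence bounded away from $\mathcal{X}^*$, use compactness of the level set to obtain a limit point, invoke Theorem~\ref{thm:ConvergenceSUMConvex} to conclude that limit point is stationary, and derive a contradiction. Your version is, if anything, slightly more explicit than the paper's in noting that the iterates remain in $\mathcal{X}^0$ because of the monotone decrease of $f$.
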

\begin{proof}
We prove the claim by contradiction. Suppose on the contrary that
there exists a subsequence $\{x^{r_j}\}$ such that
$d(x^{r_j},\mathcal{X}^*) \geq \gamma$ for some $\gamma>0$. Since
the sequence $\{x^{r_j}\}$ lies in the compact set $X^0$, it has a
limit point~$z$. By further restricting the indices of the
subsequence, we obtain
\[
d(z,\mathcal{X}^*)= \lim_{j\rightarrow \infty} d(x^{r_j},\mathcal{X}^*) \geq \gamma,
\]
which contradicts the fact that $z \in \mathcal{X}^*$ due to Theorem~\ref{thm:ConvergenceSUMConvex}.
\end{proof}

The above results show that under Assumption \ref{AssumptionA}, the SUM algorithm is globally convergent.
In the rest of this work, we derive similar results for a family of more
general inexact BCD algorithms. 

\section{The Block Successive Upper-bound Minimization Algorithm}

In many practical applications, the optimization variables can be
decomposed into independent blocks. Such block structure, when
judiciously exploited, can lead to low-complexity algorithms that
are distributedly implementable. In this section, we introduce the
Block Successive Upper-bound Minimization (BSUM) algorithm,
which effectively takes such block structure into consideration.


Let us assume that the feasible set $\mathcal{X}$ is the cartesian
product of $n$ closed convex sets: $\mathcal{X}= \mathcal{X}_1
\times \ldots \times \mathcal{X}_n$, with $\mathcal{X}_i \subseteq
\re^{m_i}$ and $\sum_i m_i = m$. Accordingly, the optimization
variable $x \in \re^m$ can be decomposed as: $x =
(x_1,x_2,\ldots,x_n)$, with $x_i \in \mathcal{X}_{i},
~i=1,\cdots,M$. We are interested in solving the problem
\begin{equation}\label{eq:OriginalProblemBlock}
\begin{split}
\min \quad & f(x)\\
\st \quad & x \in \mathcal{X}.
\end{split}
\end{equation}

Different from the SUM algorithm, the BSUM algorithm only updates
a single block of variables in each iteration. More precisely, at
iteration $r$, the selected block (say block $i$) is computed by
solving the following subproblem
\begin{equation}\label{eq:BUpperBound}
\begin{split}
\min_{x_i} \quad & u_i (x_i,x^{r-1})\\
\st \quad & x_i \in \mathcal{X}_i,
\end{split}
\end{equation}
where~$u_i(\cdot,x^{r-1})$ is again an approximation (in fact, a
global upper-bound) of the original objective $f(\cdot)$ at the
point $x^{r-1}$. Fig.~\ref{Fig:BSUMAlgorithm} summarizes the main
steps of the BSUM algorithm. Note that although the blocks are
updated following a simple cyclic rule, the algorithm and its
convergence results can be easily extended to the (more general)
essentially cyclic update rule as well. This point will be further
elaborated in Section \ref{sec:EssentiallyCyclic}.

\begin{figure}[t]
\centering
\begin{tabular}{|p{3.3in}|}
\hline
\begin{itemize}
\item [1] \;Find a feasible point $x^0\in \mathcal{X}$ and set $r = 0$
\item [2] \; \textbf{repeat}
\item [3] \quad $r = r+1$, $i = (r \;{\rm mod} \;n)+1$
\item [4] \quad  Let $\mathcal{X}^{r} = \arg \min_{x_i\in \mathcal{X}_i} u_i(x_i,x^{r-1})$
\item [5] \quad Set $x_i^r$ to be an arbitrary element in~$\mathcal{X}^{r}$
\item [6] \quad Set $x_k^r = x_k^{r-1}, \quad \forall\; k \neq i$
\item [7] \; \textbf{until} some convergence criterion is met
\end{itemize}
\\
\hline
\end{tabular}\vspace{1.2em}
\caption{Pseudo code of the BSUM algorithm}
\label{Fig:BSUMAlgorithm} \vspace{-0.5cm}
\end{figure}

Now we are ready to study the convergence behavior of the BSUM
algorithm. To this end, the following regularity conditions on the
function $u_i(\cdot, \cdot)$ are needed.

\begin{assumption} \label{AssumptionB}
\begin{align}
& u_i(y_i, y) = f(y), \quad \forall\; y\in \mathcal{X}, \forall\; i \label{B1}\tag{B1} \\
& u_i(x_i,y) \geq f(y_1, \ldots, y_{i-1}, x_i ,y_{i+1}, \ldots, y_n),\quad\; \forall\; x_i \in \mathcal{X}_i, \forall\; y \in\mathcal{X}, \forall\; i\label{B2} \tag{B2}\\
& u_i' (x_i,y;d_i)\bigg|_{x_i = y_i} =  f' (y;d), \quad \forall\; d = (0,\ldots,d_i,\ldots,0) \;\;\st \;\; y_i + d_i \in \mathcal{X}_i, \forall\; i \label{B3}\tag{B3}\\
& u_i(x_i,y) \; {\rm is} \; {\rm continuous} \; {\rm in} \; (x_i,y) \tag{B4}, \quad \forall\; i
\end{align}
\end{assumption}

Similar to Proposition~\ref{lemma:SUMAssumption}, we can identify a
sufficient condition to ensure \eqref{B3}.
\begin{prop} \label{lemma:BSUMAssumption}
Assume $f(x) = f_0(x) + f_1(x)$, where $f_0(\cdot)$ is continuously differentiable and the directional derivative of $f_1(\cdot)$ exists at every point~$x\in \mathcal{X}$. Consider $u_i(x_i,y) = u_{0,i}(x_i,y) + f_1(x)$, where $u_{0,i}(x_i,y)$ satisfies the following assumptions
\begin{align}
& u_{0,i}(x_i, x) = f_0(x),\quad \forall\; x \in \mathcal{X}, \quad\forall\; i \nonumber \\
& u_{0,i}(x_i,y) \geq f_0(y_1,\ldots, y_{i-1},x_i,y_{i+1},\ldots,y_n),\; \forall\; x,y \in \mathcal{X}\quad\forall\; i. \nonumber
\end{align}
Then, \eqref{B1}, \eqref{B2}, and \eqref{B3} hold.
\end{prop}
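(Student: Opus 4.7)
The plan is to reduce Proposition~\ref{lemma:BSUMAssumption} to the single-block statement of Proposition~\ref{lemma:SUMAssumption} by ``slicing'' along the $i$-th block. Conditions \eqref{B1} and \eqref{B2} will follow immediately from the definition $u_i(x_i,y) = u_{0,i}(x_i,y) + f_1(y_1,\ldots,y_{i-1},x_i,y_{i+1},\ldots,y_n)$ by direct substitution: for \eqref{B1}, setting $x_i = y_i$ gives $u_i(y_i,y) = u_{0,i}(y_i,y) + f_1(y) = f_0(y) + f_1(y) = f(y)$ using the tightness hypothesis on $u_{0,i}$; for \eqref{B2}, the block-wise upper-bound hypothesis on $u_{0,i}$ combined with the matching evaluation of $f_1$ yields $u_i(x_i,y) \geq f_0(y_1,\ldots,x_i,\ldots,y_n) + f_1(y_1,\ldots,x_i,\ldots,y_n) = f(y_1,\ldots,x_i,\ldots,y_n)$.

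The substantive step is \eqref{B3}. Fix $y \in \mathcal{X}$ and $i$, and introduce the one-block restrictions
\[
\tilde f(x_i) \triangleq f(y_1,\ldots,y_{i-1},x_i,y_{i+1},\ldots,y_n), \qquad \tilde u(x_i,z_i) \triangleq u_i\bigl(x_i,(y_1,\ldots,y_{i-1},z_i,y_{i+1},\ldots,y_n)\bigr),
\]
together with the analogous splittings $\tilde f = \tilde f_0 + \tilde f_1$ and $\tilde u = \tilde u_0 + \tilde f_1$, where $\tilde f_0$ and $\tilde u_0$ are the restrictions of $f_0$ and $u_{0,i}$. Then $\tilde f_0$ is continuously differentiable on $\mathcal{X}_i$, $\tilde f_1$ has directional derivatives everywhere on $\mathcal{X}_i$, and the stated hypotheses on $u_{0,i}$ translate directly into $\tilde u_0(z_i,z_i) = \tilde f_0(z_i)$ and $\tilde u_0(x_i,z_i) \geq \tilde f_0(x_i)$ for all $x_i,z_i \in \mathcal{X}_i$. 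Consequently the hypotheses of Proposition~\ref{lemma:SUMAssumption} are met for the pair $(\tilde f,\tilde u)$.

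Applying that proposition gives $\tilde u'(x_i,y_i;d_i)\bigl|_{x_i=y_i} = \tilde f'(y_i;d_i)$ for every $d_i$ with $y_i+d_i \in \mathcal{X}_i$. Two last identifications close the argument: by construction $\tilde u'(x_i,y_i;d_i)\bigl|_{x_i=y_i} = u_i'(x_i,y;d_i)\bigl|_{x_i=y_i}$, and for $d = (0,\ldots,d_i,\ldots,0)$ the $\liminf$ defining $f'(y;d)$ only sees the $i$-th block, so $\tilde f'(y_i;d_i) = f'(y;d)$. Chaining these two equalities yields \eqref{B3}. The only genuine care needed is notational bookkeeping around the split arguments of $u_{0,i}$ (first argument = block being varied, second argument = full vector); once the slice is set up correctly, the block case reduces transparently to the single-block result already proved, so I expect no substantive obstacle beyond this.
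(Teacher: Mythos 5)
Your proposal is correct and matches the paper's intent: the paper's own proof of this proposition is literally the one-line remark that it is ``exactly the same as the proof of Proposition~\ref{lemma:SUMAssumption},'' and your slicing construction is just a careful formalization of that reduction, including the right reading of the slightly abusive notation $f_1(x)$ as $f_1(y_1,\ldots,y_{i-1},x_i,y_{i+1},\ldots,y_n)$. The only point worth noting is that applying Proposition~\ref{lemma:SUMAssumption} to the slice requires $u_{0,i}(\cdot,y)$ to be continuously differentiable in $x_i$, a hypothesis the paper omits from the statement of this proposition but clearly intends to carry over.
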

\begin{proof}
The proof is exactly the same as the proof in Proposition~\ref{lemma:SUMAssumption}.
\end{proof}

The convergence results regarding to the BSUM algorithm consist of
two parts. In the first part, a quasi-convexity of the objective
function is assumed, which guarantees the existence of the limit
points. This is in the same spirit of the classical proof of
convergence for the BCD method in \cite{Bertsekas_Book_Nonlinear}.
However, if we know that the iterates lie in a compact set, then a
stronger result can be proved. Indeed, in the second part of the
theorem, the convergence is obtained by relaxing the quasi-convexity
assumption while imposing the compactness assumption of level sets.

\begin{thm}\label{thm:ConvergenceBSUMConvex}
\begin{itemize}
\item[]
\item[(a)] Suppose that the function $u_i(x_i,y)$ is quasi-convex in~$x_i$ and Assumption~\ref{AssumptionB} holds. Furthermore, assume that the subproblem~\eqref{eq:BUpperBound} has a unique solution for any point~$x^{r-1} \in \mathcal{X}$. Then, every limit point~$z$ of the iterates generated by the BSUM algorithm is a coordinatewise minimum of~\eqref{eq:OriginalProblemBlock}. In addition, if $f(\cdot)$ is regular at~$z$, then $z$ is a stationary point of~\eqref{eq:OriginalProblemBlock}.
\item [(b)] Suppose the level set $\mathcal{X}^0 = \{x \mid f(x) \leq f(x^0)\}$ is compact and Assumption~\ref{AssumptionB} holds. Furthermore, assume that the subproblem~\eqref{eq:BUpperBound} has a unique solution for any point $x^{r-1} \in \mathcal{X}$ for at least $n-1$ blocks. If $f(\cdot)$ is regular at every point in the set of stationary points~$\mathcal{X}^*$ with respect to the coordinates $x_1,\ldots,x_n$. Then, the iterates generated by the BSUM algorithm converge to the set of stationary points, i.e.,
\[
\lim_{r \rightarrow \infty} \quad d(x^r,\mathcal{X}^*) = 0.
\]
\end{itemize}
\end{thm}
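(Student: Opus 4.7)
My plan is to follow the same template as Theorem~\ref{thm:ConvergenceSUMConvex}, with the added complication that each BSUM step updates only a single block: one step does not witness the full stationarity condition, so I must pass to the limit over a whole cycle of $n$ updates and force the successive iterates $x^{r_j},x^{r_j+1},\ldots,x^{r_j+n}$ to share the same cluster point. Before splitting into the two cases, I would first replay the SUM decrease argument block-wise: letting $i(r)$ denote the block updated at step $r+1$, the chain $f(x^{r+1})\le u_{i(r)}(x^{r+1}_{i(r)},x^r)\le u_{i(r)}(x^{r}_{i(r)},x^r)=f(x^r)$, obtained from \eqref{B2}, optimality of $x^{r+1}_{i(r)}$, and \eqref{B1} in that order, shows that $\{f(x^r)\}$ is non-increasing. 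Since \eqref{B1} and (B4) make $f$ continuous, the existence of a limit point $z$ of a subsequence $\{x^{r_j}\}$ gives $f(x^r)\downarrow f(z)$ on the whole sequence, and consequently $u_{i(r)}(x^{r+1}_{i(r)},x^r)\to f(z)$ along the corresponding subsequences.

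For part (a), the technical heart is showing that, after refining the subsequence, $x^{r_j+k}\to z$ for every $k\in\{0,1,\ldots,n\}$. Because only $n$ blocks can be selected at step $r_j+1$ under the cyclic rule, pigeonhole produces a sub-subsequence on which the same block (say block~$1$) is updated at every $r_j+1$. Any cluster point $\hat z_1$ of $\{x_1^{r_j+1}\}$ satisfies $u_1(\hat z_1,z)\le u_1(y_1,z)$ for every $y_1\in\mathcal{X}_1$, obtained by passing to the limit in the subproblem optimality; the sandwich above then forces $u_1(\hat z_1,z)=u_1(z_1,z)=f(z)$, so both $\hat z_1$ and $z_1$ minimize $u_1(\cdot,z)$ on $\mathcal{X}_1$, and the uniqueness hypothesis forces $\hat z_1 = z_1$. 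The quasi-convexity assumption plays the Tseng~\cite{BCDTseng} role here: it rules out unbounded minimizing directions for $u_1(\cdot,x^{r_j})$ and guarantees that such cluster points actually exist and stay near $z_1$. Iterating through the cycle yields $x^{r_j+k}\to z$ for every $k\le n$. For each block $i$, I now pick the $k$ at which block~$i$ is updated and take the limit of the subproblem optimality to conclude that $z_i$ minimizes $u_i(\cdot,z)$, whence $u_i'(x_i,z;d_i)|_{x_i=z_i}\ge 0$ for every feasible $d_i$, which by \eqref{B3} is $f'(z;(0,\ldots,d_i,\ldots,0))\ge 0$. This is the coordinatewise-minimality statement; regularity of $f$ at $z$ then combines these $n$ block-wise inequalities into $f'(z;d)\ge 0$ for all feasible $d$, i.e.\ $z\in\mathcal{X}^*$.

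For part (b), I would mirror the contradiction argument used in the corollary following Theorem~\ref{thm:ConvergenceSUMConvex}: if $d(x^r,\mathcal{X}^*)\not\to 0$, a subsequence with $d(x^{r_j},\mathcal{X}^*)\ge\gamma>0$ has, by compactness of the level set, a limit point $z$; it then suffices to prove $z\in\mathcal{X}^*$. The argument of part~(a) transfers almost verbatim with one substitution: compactness takes the place of quasi-convexity. By successive extractions I can arrange that each of $x^{r_j+1},\ldots,x^{r_j+n}$ converges, say to $z^1,\ldots,z^n$ with $z^0=z$ and $f(z^k)=f(z)$ for all $k$. At each of the at least $n-1$ blocks whose subproblem has a unique minimizer, the same matching argument as in (a) forces $z^k$ and $z^{k-1}$ to agree on the updated block, propagating the cluster point through the cycle; for the lone possibly-non-unique block the limit optimality inequality $u_{i^*}(z_{i^*}^{k^*},z^{k^*-1})\le u_{i^*}(y_{i^*},z^{k^*-1})$ still yields block-wise stationarity there. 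Invoking \eqref{B3} and the assumed regularity of $f$ at points of $\mathcal{X}^*$ combines these into $f'(z;d)\ge 0$ for every feasible $d$, contradicting $d(z,\mathcal{X}^*)\ge\gamma$.

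The main obstacle in both parts is the successive-iterate convergence across a full $n$-step cycle. In (a) it is delivered by quasi-convexity plus uniqueness of the subproblem minimizer; in (b) it is delivered by level-set compactness, but now the single possibly-non-unique block needs separate careful tracking so that passing to the limit still produces the correct block-wise stationarity after the full cycle.
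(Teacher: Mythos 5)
Your overall architecture coincides with the paper's: the block-wise descent chain $f(x^{r+1})\le u_{i}(x^{r+1}_{i},x^r)\le u_{i}(x^{r}_{i},x^r)=f(x^r)$, pigeonholing a fixed block updated infinitely often along the subsequence, forcing the whole $n$-step cycle of iterates to share the cluster point, reading off block-wise optimality of $u_i(\cdot,z)$ in the limit, and invoking \eqref{B3} plus regularity. Part (b) likewise matches the paper's construction of limits $z^0,\ldots,z^n$ with $f(z^0)=\cdots=f(z^n)$, uniqueness used on $n-1$ blocks to collapse them to $z$, and the direct limit optimality inequality handling the remaining block.

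The one step that does not go through as you state it is the proof that $x_1^{r_j+1}\to z_1$ in part (a). You argue that every cluster point $\hat z_1$ of $\{x_1^{r_j+1}\}$ minimizes $u_1(\cdot,z)$ and hence equals $z_1$ by uniqueness; but this yields convergence only if $\{x_1^{r_j+1}\}$ is bounded, and part (a) carries no compactness hypothesis. Your appeal to quasi-convexity as ``ruling out unbounded minimizing directions and guaranteeing that cluster points exist'' is an assertion, not an argument: quasi-convexity of $u_1(\cdot,x^{r_j})$ with a unique minimizer does not by itself prevent those minimizers from escaping to infinity as $x^{r_j}\to z$. The paper's proof is built precisely to avoid needing boundedness. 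Supposing $\gamma^{r_j}=\|x_1^{r_j+1}-x_1^{r_j}\|\ge\bar\gamma>0$ along a subsequence, it normalizes $s^{r_j}=(x_1^{r_j+1}-x_1^{r_j})/\gamma^{r_j}$, which lies on the unit sphere and therefore has a cluster point $\bar s$ whether or not $\gamma^{r_j}$ is bounded; quasi-convexity is used only to sandwich $u_1$ at the intermediate points $x_1^{r_j}+\epsilon\bar\gamma s^{r_j}$, $\epsilon\in[0,1]$, between $f(x^{r_j+1})$ and $f(x^{r_j})$, so that in the limit the entire segment $\{z_1+\epsilon\bar\gamma\bar s:\epsilon\in[0,1]\}$ consists of minimizers of $u_1(\cdot,z)$ (cf.\ \eqref{TempTheomConvBert6}), contradicting uniqueness. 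Replace your cluster-point argument with this normalization step; everything downstream of it in your sketch then agrees with the paper.
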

\begin{proof}
The proof of part (a) is similar to the one in \cite{Bertsekas_Book_Nonlinear} for block coordinate descent approach. First of all, since a locally tight upper bound of~$f(\cdot)$ is minimized at each iteration, we have
\begin{equation}\label{eq:DecreasingObjFnThm}
f(x^0) \geq f(x^1) \geq f(x^2)\geq \ldots.
\end{equation}
Therefore, the continuity of $f(\cdot)$ implies
\begin{equation} \label{eq:fxgoesfz}
\lim_{r \rightarrow \infty}f(x^{r}) = f(z).
\end{equation}

Let us consider the subsequence $\{x^{r_j}\}$ converging to the limit point $z$. Since the number of blocks is finite, there exists a block which is updated infinitely often in the subsequence $\{r_j\}$. Without loss of generality, we assume that block~$n$ is updated infinitely often. Thus, by further restricting to a subsequence, we can write
\begin{equation}
x_n^{r_j}  = \arg \min_{x_n}  \quad u_n (x_n, x^{r_j-1}). \nonumber
\end{equation}
Now we prove that $x^{r_j + 1} \rightarrow z$, in other words, we
will show that $x_1^{r_j +1} \rightarrow z_1$. Assume the contrary
that $x_1^{r_j+1}$ does not converge to $z_1$. Therefore by further
restricting to a subsequence, there exists $\bar\gamma > 0$ such
that
\[
\bar\gamma \leq \gamma^{r_j} = \|x_1^{r_j+1} - x_1^{r_j}\|,\
\forall~r_j.
\]
Let us normalize the difference between $x_1^{r_j}$ and $x_1^{r_j+1}$, i.e.,
\[
s^{r_j} \triangleq \frac{x_1^{r_j+1} - x_1^{r_j}}{\gamma^{r_j}}.
\]
Notice that $\|s^{r_j}\|=1$, thus $s^{r_j}$ belongs to a compact set and it has a limit point $\bar{s}$. By further restricting to a subsequence that converges to~$\bar{s}$, using \eqref{B1} and \eqref{B2}, we obtain
\begin{align}
f(x^{r_j+1}) &\leq u_1(x_1^{r_j+1},x^{r_j}) \label{TempTheomConvBert1}\\
& = u_1(x_1^{r_j} + \gamma^{r_j}s^{r_j} , x^{r_j}) \label{TempTheomConvBert2}\\
& \leq u_1(x_1^{r_j} + \epsilon \bar\gamma s^{r_j} , x^{r_j}), \quad  \forall\; \epsilon \in [0,1] \label{TempTheomConvBert3}\\
& \leq u_1(x^{r_j}_1,x^{r_j}) \label{TempTheomConvBert4}\\
& = f(x^{r_j}) \label{TempTheomConvBert5},
\end{align}
where \eqref{TempTheomConvBert1} and \eqref{TempTheomConvBert5} hold due to \eqref{B1} and \eqref{B2}. The inequalities \eqref{TempTheomConvBert3} and \eqref{TempTheomConvBert4} are the result of quasi-convexity of $u(\cdot,x^{r_j})$. Letting $j \rightarrow \infty$ and combining \eqref{TempTheomConvBert1}, \eqref{TempTheomConvBert3}, \eqref{eq:fxgoesfz}, and \eqref{TempTheomConvBert5} imply
\[
f(z)\leq u_1(z_1 + \epsilon \bar\gamma \bar{s} ,z) \leq f(z), \quad \forall\; \epsilon \in [0,1],
\]
or equivalently
\begin{equation} \label{TempTheomConvBert6}
f(z) = u_1(z_1 + \epsilon \bar\gamma \bar{s},z),  \quad \forall\; \epsilon \in [0,1].
\end{equation}
Furthermore,
\begin{align}
u_1(x_1^{r_{j+1}} ,x^{r_{j+1}}) &= f(x^{r_{j+1}}) \leq f(x^{r_j+1}) \nonumber\\
&\leq u_1(x_1^{r_j+1}, x^{r_j}) \leq u_1(x_1,x^{r_j}), \quad \forall\; x_1 \in \mathcal{X}_1.\nonumber
\end{align}
Letting $j \rightarrow \infty$, we obtain
\[
u_1(z_1,z) \leq u_1(x_1,z), \quad \forall\; x_1 \in \mathcal{X}_1,
\]
which further implies that $z_1$ is the minimizer of $u_1(\cdot,z)$. On the other hand, we assume that the minimizer is unique, which contradicts \eqref{TempTheomConvBert6}. Therefore, the contrary assumption is not true, i.e., $x^{r_j+1} \rightarrow z$.\\

Since $x_1^{r_j+1} = \arg \min_{x_1\in \mathcal{X}_1} \quad u_1(x_1, x^{r_j})$, we get
\[
u_1(x_1^{r_j+1} , x^{r_j}) \leq u_1(x_1 , x^{r_j}) \quad \forall\; x_1 \in \mathcal{X}_1.
\]
Taking the limit $j \rightarrow \infty$ implies
\[
u_1(z_1 , z) \leq u_1(x_1 , z) \quad \forall\; x_1 \in \mathcal{X}_1,
\]
which further implies
\[
 u_1'(x_1 , z;d_1)\bigg|_{x_1 = z_1} \geq 0, \quad \forall\; d_1 \in \mathbb{R}^{m_1} \quad{\rm with} \quad z_1+d_1 \in \mathcal{X}_1.
\]
Similarly, by repeating the above argument for the other blocks, we obtain
\begin{equation} \label{eq:DirDerivOptiUTheom}
u'_k (x_k , z;d_k)\bigg|_{x_k = z_k} \geq 0, \quad \forall\; d_k \in \mathbb{R}^{m_k} \quad{\rm with} \quad d_k + z_k \in \mathcal{X}_k, \quad \quad \forall\; k=1,\ldots, n.
\end{equation}
Combining \eqref{B3} and \eqref{eq:DirDerivOptiUTheom} implies
\begin{equation}
  f'(z;d) \geq 0,\quad \forall\; d=(0,\ldots,d_k,\ldots,0)  \quad\st \quad d + z \in \mathcal{X}, \;\forall\; k\nonumber
\end{equation}
in other words, $z$ is the coordinatewise minimum  of $f(\cdot)$.\\

Now we prove part (b) of the theorem. Without loss of generality, let us assume that \eqref{eq:BUpperBound} has a unique solution at every point $x^{r-1}$ for $i=1,2,\ldots,n-1$. Since the iterates lie in a compact set, we only need to show that every limit point of the iterates is a stationary point of~$f(\cdot)$. To do so, let us consider a subsequence~$\{x^{r_j}\}$ which converges to a limit point~$z \in \mathcal{X}^0 \subseteq \mathcal{X}$. Since the number of blocks is finite, there exists a block~$i$ which is updated infinitely often in the subsequence $\{x^{r_j}\}$. By further restricting to a subsequence, we can assume that
\[
x_i^{r_j} \in \arg\min_{x_i} \quad u_i(x_i, x^{r_j-1}).
\]
Since all the iterates lie in a compact set, we can further restrict to a subsequence such that
\[
\lim_{j \rightarrow \infty}\quad x^{r_j-i + k}  = z^k, \quad  \forall\; k=0,1,\ldots,n,
\]
where $z^k \in \mathcal{X}^0 \subseteq \mathcal{X}$ and $z^i = z$. Moreover, due to the update rule in the algorithm, we have
\[
u_k(x_k^{r_j -i + k}, x^{r_j - i + k -1}) \leq u_k(x_k, x^{r_j - i + k-1}), \quad \forall\; x_k \in \mathcal{X}_k, \quad\quad k=1,2,\ldots, n.
\]
Taking the limit~$j \rightarrow \infty$, we obtain
\begin{equation} \label{eq:udirectionalmin}
u_k(z_k^k , z^{k-1}) \leq u_k(x_k , z^{k-1}), \quad \forall\; x_k \in \mathcal{X}_k,\quad \quad k=1,2,\ldots, n.
\end{equation}
Combining \eqref{eq:udirectionalmin}, \eqref{B1} and \eqref{B2} implies
\begin{equation}\label{eq:boundingu}
f(z^k) \leq u_k (z_k^k,z^{k-1}) \leq u_k(z_k^{k-1}, z^{k-1}) = f(z^{k-1}),\quad  k = 1,\ldots,n.
\end{equation}
On the other hand, the objective function is non-increasing in the algorithm and it has a limit. Thus,
\begin{equation}\label{eq:limitObjThm2}
f(z^0) = f(z^1) = \ldots = f(z^n).
\end{equation}
Using \eqref{eq:boundingu}, \eqref{eq:limitObjThm2}, and \eqref{eq:udirectionalmin}, we obtain
\begin{equation}\label{eq:temp1Thm2}
f(z) = u_k (z_k^k ,z^{k-1}) \leq u_k (x_k, z^{k-1}), \; \forall\; x_k \in \mathcal{X}_k, \quad k = 1,2,\ldots, n.
\end{equation}
Furthermore, $f(z) = f(z^{k-1}) = u_k(z_k^{k-1}, z^{k-1})$ and therefore,
\begin{equation}\label{eq:temp2Thm2}
u_k (z_k^{k-1} ,z^{k-1}) \leq u_k (x_k, z^{k-1}), \; \forall\; x_k \in \mathcal{X}_k, \quad k = 1,2,\ldots, n.
\end{equation}
The inequalities~$\eqref{eq:temp1Thm2}$ and \eqref{eq:temp2Thm2} imply that $z_k^{k-1}$ and $z_k^k$ are both the minimizer of $u_k(\cdot,z^{k-1})$. However, according to our assumption, the minimizer is unique for $k = 1,2,\ldots, n-1$ and therefore,
\[
z^0 = z^1 = z^2 = \ldots =z^{n-1} = z
\]
Plugging the above relation in \eqref{eq:udirectionalmin} implies
\begin{equation}\label{eq:temp3Thm2}
u_k (z_k ,z) \leq u_k (x_k, z), \; \forall\; x_k \in \mathcal{X}_k, \quad k = 1,2,\ldots, n-1.
\end{equation}
Moreover, by setting $k=n$ in \eqref{eq:temp2Thm2}, we obtain
\begin{equation}\label{eq:temp4Thm2}
u_n (z_n ,z) \leq u_n (x_n, z), \quad\forall\; x_n \in \mathcal{X}_n.
\end{equation}
The inequalities~\eqref{eq:temp3Thm2} and \eqref{eq:temp4Thm2} imply that
\[
 u'_k(x_k,z;d_k)\bigg|_{x_k = z_k} \geq 0, \quad \forall\; d_k\in \re^{m_k}\; {\rm with} \;z_k + d_k \in \mathcal{X}_k, \quad k=1,2,\ldots,n.
\]
Combining this with \eqref{B3} yields
\[
 f'(z;d) \geq 0, \quad \forall\; d = (0,\ldots,d_k,\ldots,0) \;\;{\rm with} \;\;z_k + d_k \in \mathcal{X}_k, \quad k = 1,2,\ldots,n,
\]
which implies the stationarity of the point~$z$ due to the regularity of $f(\cdot)$.
\end{proof}

The above result extends the existing result of block coordinate descent method \cite{Bertsekas_Book_Nonlinear}
and \cite{BCDTseng} to the BSUM case where only an approximation of the objective function
is minimized at each iteration. As we will see in Section \ref{sec:Applications},
our result implies the global convergence of several existing algorithms including the EM algorithm
or the DC method when the Gauss-Seidel update rule is used.\\

\section{The Maximum Improvement Successive Upper-bound Minimization
Algorithm}\label{sec:MISUM}

A key assumption for the BSUM algorithm is the uniqueness of the
minimizer of the subproblem. This assumption is necessary even for
the simple BCD method \cite{Bertsekas_Book_Nonlinear}. In general,
by removing such assumption, the convergence is not guaranteed (see
\cite{PowellBCD73} for examples) unless we assume pseudo convexity
in pairs of the variables \cite{ZadehBCDPsudoConvexity},
\cite{BCDTseng}. In this section, we explore the possibility of
removing such uniqueness assumption.

Recently, Chen \textit{et al.} \cite{MBIChen2012} have proposed a
related Maximum Block Improvement (MBI) algorithm, which differs
from the conventional BCD algorithm only by its update schedule.
More specifically, only the block that provides the {\it maximum
improvement} is updated at each step. Remarkably, by utilizing such
modified updating rule (which is similar to the well known
Gauss-Southwell update rule), the per-block subproblems are allowed
to have multiple solutions. Inspired by this recent development, we
propose to modify the BSUM algorithm similarly by simply updating
the block that gives the maximum improvement. We name the resulting
algorithm the Maximum Improvement Successive Upper-bound
Minimization (MISUM) algorithm, and list its main steps in
Fig.~\ref{Fig:MISUMAlgorithm}.

\begin{figure*}[t]
\centering
\begin{tabular}{|p{3.3in}|}
\hline
\begin{itemize}
\item [1] \;Find a feasible point $x^0\in \mathcal{X}$ and set $r = 0$
\item [2] \; \textbf{repeat}
\item [3] \quad $r = r+1$
\item [4] \quad Let $k = \arg \min_i \min_{x_i} u_i(x_i,x^{r-1})$
\item [5] \quad  Let $\mathcal{X}^{r} = \arg \min_{x_k\in \mathcal{X}_k} u_k(x_k,x^{r-1})$
\item [6] \quad Set $x_k^r$ to be an arbitrary element in~$\mathcal{X}^{r}$
\item [7] \quad Set $x_i^r = x_i^{r-1}, \quad \forall\; i \neq k$
\item [8] \; \textbf{until} some convergence criterion is met
\end{itemize}
\\
\hline
\end{tabular}\vspace{1.2em}
\caption{Pseudo code of the MISUM algorithm}
\label{Fig:MISUMAlgorithm} \vspace{-0.5cm}
\end{figure*}

Clearly the MISUM algorithm is more general than the MBI method
proposed in \cite{MBIChen2012}, since only an approximate version
of the subproblem is solved at each iteration.
Theorem~\ref{thm:ConvergenceMISUM} states the convergence result for
the proposed MISUM algorithm.
\begin{thm}\label{thm:ConvergenceMISUM}
Suppose that Assumption~\ref{AssumptionB} is satisfied. Then, every limit point~$z$ of the iterates generated by the MISUM algorithm is a coordinatewise minimum of~\eqref{eq:OriginalProblemBlock}. In addition, if $f(\cdot)$ is regular at~$z$, then $z$ is a stationary point of~\eqref{eq:OriginalProblemBlock}.
\end{thm}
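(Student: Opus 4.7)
The plan is to leverage the maximum-improvement selection rule to remove the uniqueness assumption needed in Theorem~\ref{thm:ConvergenceBSUMConvex}, reducing the analysis to something essentially as clean as the SUM case. I would first establish the standard monotone descent property: at iteration $r$, if block $k$ is the block selected by MISUM, then
\[
f(x^r) \leq u_k(x_k^r, x^{r-1}) \leq u_k(x_k^{r-1}, x^{r-1}) = f(x^{r-1})
\]
by \eqref{B2}, the optimality of $x_k^r$ in step~5 of Fig.~\ref{Fig:MISUMAlgorithm}, and \eqref{B1}. For a subsequence $\{x^{r_j}\}$ converging to a limit point $z$, continuity of $f$ gives $f(x^{r_j})\to f(z)$, and since $\{f(x^r)\}$ is monotone and has $f(z)$ as a limit point, the entire sequence $\{f(x^r)\}$ converges to $f(z)$.

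The crucial step is to exploit the MISUM selection rule to get a per-iteration inequality that covers every block at once. If $k$ is the block chosen at iteration $r+1$, then for any block $i$ and any $x_i \in \mathcal{X}_i$,
\[
f(x^{r+1}) \leq u_k(x_k^{r+1}, x^r) = \min_{x_k \in \mathcal{X}_k} u_k(x_k, x^r) \leq \min_{x_i \in \mathcal{X}_i} u_i(x_i, x^r) \leq u_i(x_i, x^r),
\]
where the middle inequality is exactly the max-improvement property built into step~4. Applying this along the converging subsequence yields $f(x^{r_j+1}) \leq u_i(x_i, x^{r_j})$ for every $i$ and every $x_i \in \mathcal{X}_i$. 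Passing to the limit $j\to\infty$ using the continuity assumption \eqref{B4} and the already-established convergence $f(x^{r_j+1}) \to f(z)$ gives $f(z) \leq u_i(x_i, z)$ for every $i$ and every $x_i \in \mathcal{X}_i$.

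Because $u_i(z_i, z) = f(z)$ by \eqref{B1}, this last inequality says that $z_i$ minimizes $u_i(\cdot, z)$ over $\mathcal{X}_i$ for every block $i$. Hence $u'_i(x_i, z; d_i)\big|_{x_i=z_i} \geq 0$ for all $d_i$ with $z_i + d_i \in \mathcal{X}_i$, and \eqref{B3} translates this into $f'(z; (0,\ldots, d_i, \ldots, 0)) \geq 0$ for all such $d_i$. This shows $z$ is a coordinatewise minimum of \eqref{eq:OriginalProblemBlock}; the regularity of $f$ at $z$ then upgrades this directly to $f'(z; d) \geq 0$ for every feasible direction $d$, so $z$ is a stationary point.

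The feature that makes the argument go through, and what I would flag as the conceptual (rather than technical) crux, is that the max-improvement rule produces a single chain of inequalities that simultaneously bounds $u_i(\cdot, x^r)$ for \emph{all} blocks $i$, not only the one being updated. This is precisely what frees us from the per-subproblem uniqueness assumption used in the BSUM proof of Theorem~\ref{thm:ConvergenceBSUMConvex}, where one has to chain estimates across $n$ consecutive iterations and invoke uniqueness to pin down the intermediate limit points $z^0, z^1, \ldots, z^n$. No such cross-iteration argument is needed here, which is why the MISUM analysis parallels the scalar SUM analysis almost verbatim.
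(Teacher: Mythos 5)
Your proof is correct and follows essentially the same route as the paper's: the paper also defines $R_i(y)=\min_{x_i}u_i(x_i,y)$ and uses the chain $u_i(x_i,x^{r_j})\geq R_i(x^{r_j})\geq u_k(x_k^{r_j+1},x^{r_j})\geq f(x^{r_j+1})\geq f(x^{r_{j+1}})=u_i(x_i^{r_{j+1}},x^{r_{j+1}})$, which is your key inequality written with the $R_i$ notation, before passing to the limit and invoking \eqref{B3} and regularity. Your closing observation about why the max-improvement rule obviates the uniqueness assumption accurately captures the point of the theorem.
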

\begin{proof}
Let us define $R_i(y)$ to be the minimum objective value of the
$i$-th subproblem at a point~$y$, i.e.,
\[
R_i(y) \triangleq \min_{x_i} \quad u_i(x_i,y).
\]
Using a similar argument as in Theorem
\ref{thm:ConvergenceBSUMConvex}, we can show that the sequence of
the objective function values are non-increasing, that is
\[
f(x^r) = u_i(x_i^r,x^r) \geq R_i (x^r) \geq f(x^{r+1}).
\]
Let $\{x^{r_j}\}$ be the subsequence converging to a limit point~$z$. For every
fixed block index $i=1,2,\ldots,n$ and every $x_i \in
\mathcal{X}_i$, we have the following series of inequalities
\begin{align}
u_i (x_i,x^{r_j}) &\geq R_i(x^{r_j})\nonumber\\
&\geq u_k(x_k^{r_j+1},x^{r_j}) \nonumber\\
&\geq f(x^{r_{j}+1}) \nonumber\\
& \geq f(x^{r_{j+1}}) \nonumber\\
& = u_i (x_i^{r_{j+1}} , x^{r_{j+1}}), \nonumber
\end{align}
where we use $k$ to index the block that provides the maximum
improvement at iteration $r_j+1$. The first and the second
inequalities are due to the definition of the function $R_i(\cdot)$
and the MISUM update rule, respectively. The third inequality is
implied by the upper bound assumption \eqref{B2}, while the last
inequality is due to the non-increasing property of the objective values.

Letting $j\rightarrow \infty$, we obtain
\[
u_i(x_i,z) \geq u_i(z_i,z), \quad \forall\; x_i \in \mathcal{X}_i, \quad  i=1,2,\ldots,n.
\]
The first order optimality condition implies
\[
u'_i(x_i,z;d_i)\bigg|_{x_i = z_i} \geq 0, \quad \forall\; d_i \;\;{\rm with}\;\;z_i + d_i \in \mathcal{X}_i, \quad \forall\; i = 1,2,\ldots,n.
\]
Combining this with \eqref{B3} yields
\[
f'(z;d)\geq 0, \quad \forall\; d = (0,\ldots,d_i,\ldots,0) \;\;{\rm with} \;\;z_i + d_i \in \mathcal{X}_i, \quad i = 1,2,\ldots,n.
\]
In other words, $z$ is the coordinatewise minimum of $f(\cdot)$.
\end{proof}

The main advantage of the MISUM algorithm over the BSUM algorithm is
that its convergence does not rely on the uniqueness of the
minimizer for the subproblems. On the other hand, each iteration of
MISUM algorithm is more expensive than the BSUM since the
minimization needs to be performed for all the blocks. Nevertheless,
the MISUM algorithm is more suitable when parallel processing units
are available, since the minimizations with respect to all the
blocks can be carried out simultaneously.

\section{Successive Convex Approximation of a Smooth Function}
\label{sec:BSCA}

In the previous sections, we have demonstrated that the stationary
solutions of the problems \eqref{eq:OriginalProblem} and
\eqref{eq:OriginalProblemBlock} can be obtained by successively
minimizing a sequence of upper-bounds of $f(\cdot)$. However, in
practice, unless the objective $f(\cdot)$ possesses certain
convexity/concavity structure, those upper-bounds may not be easily
identifiable. In this section, we extend the BSUM algorithm by
further relaxing the requirement that the approximation functions $\{u_i(x_i,y)\}$
must be the global upper-bounds of the original objective
$f$.

%

Throughout this section, we use $h_i (., .)$ to denote the convex
approximation function for the $i$th block. Suppose that $h_i(x_i,
x)$ is no longer a global upper-bound of $f(x)$, but only a first
order approximation of $f(x)$ at each point, i.e.,
\begin{equation}
h'_i (y_i , x;d_i)\bigg|_{y_i=x_i} =  f'(x;d), \quad   \;\forall\; d =
(0, \ldots, d_i, \ldots,0)\quad {\rm with}\;\; \;x_i + d_i \in
\mathcal{X}_i.\label{assptnNotubd}
\end{equation}
In this case, simply optimizing the approximate functions in each
step may not even decrease the objective function. Nevertheless, the
minimizer obtained in each step can still be used to construct a
good search direction, which, when combined with a proper step size
selection rule, can yield a sufficient decrease of the objective value.

Suppose that at iteration $r$, the $i$-th block needs to be updated.
Let $y^r_i\in\min_{y_i\in\mathcal{X}_i} h_i(y_i,x^{r-1})$ denote the
optimal solution for optimizing the $i$-th approximation function at
the point $x^{r-1}$. We propose to use $y^{r}_i-x^{r-1}_i$ as the
search direction, and adopt the Armijo rule to guide the step size
selection process. We name the resulting algorithm the
Block Successive Convex Approximation (BSCA) algorithm. Its
main steps are given in Figure~\ref{Fig:BSCAalgorithm}. 

\begin{figure*}[t]
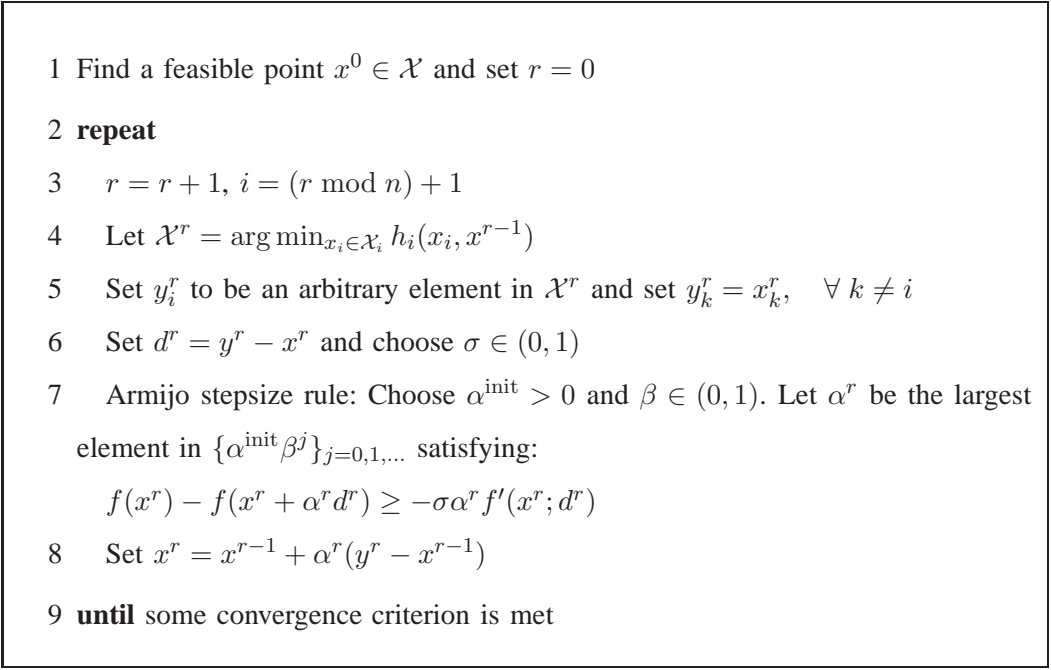

\centering
\begin{tabular}{|p{5.3in}|}
\hline
\begin{itemize}
\item [1] \;Find a feasible point $x^0\in \mathcal{X}$ and set $r = 0$
\item [2] \; \textbf{repeat}
\item [3] \quad $r = r+1$, $i = (r \;{\rm mod} \;n)+1$
\item [4] \quad  Let $\mathcal{X}^{r} = \arg \min_{x_i\in \mathcal{X}_i} h_i(x_i,x^{r-1})$
\item [5] \quad Set $y_i^r$ to be an arbitrary element in~$\mathcal{X}^{r}$ and set $y_k^r = x_k^r, \quad \forall\; k \neq i$
\item [6] \quad Set $d^r = y^r - x^r$ and choose $\sigma\in (0,1)$
\item [7] \quad Armijo stepsize rule: Choose $\alpha^{\rm init} >0$ and $\beta \in (0,1)$. Let $\alpha^r$ be the largest element in $\{\alpha^{\rm init} \beta^j\}_{j = 0,1,\ldots}$ satisfying:
\item[] \quad $f(x^r) - f(x^r + \alpha^r d^r) \geq - \sigma \alpha^r  f'(x^r;d^r)$
\item [8] \quad Set $x^r = x^{r-1} + \alpha^r (y^r - x^{r-1})$
\item [9] \; \textbf{until} some convergence criterion is met \vspace*{-10pt}
\end{itemize}
\\
\hline
\end{tabular}
\caption{Pseudo code of the BSCA algorithm}
\label{Fig:BSCAalgorithm}
\end{figure*}
%

Note that for $d^r=(0,\ldots,d_i^r,\ldots,0)$ with $d_i^r = y_i^r - x_i^r$, we have
\begin{equation}
 f'(x^r;d^r)  =  h_i' (x_i, x^r;d_i^r)\bigg|_{x_i = x_i^r} = \lim_{\lambda \downarrow 0} \frac{h_i (x_i^r + \lambda d_i^r , x^r) -h_i(x_i^r,x^r)}{\lambda} \leq 0, \label{eq:negativeDirection}
\end{equation}
where the inequality is due to the fact that $h_i(\cdot)$ is convex
 and $y_i^r = x_i^r + d_i^r$ is the minimizer at iteration~$r$.
Moreover, there holds
\[
f(x^r) - f(x^r + \alpha d^r) = - \alpha  f'(x^r;d^r) + o(\alpha), \quad \forall\; \alpha>0.
\]
Hence the Armijo step size selection rule in Figure~\ref{Fig:BSCAalgorithm}
is well defined when $ f'(x^r;d^r) \neq 0$, and there exists $j\in
\{0,1,\ldots\}$ such that  for $\alpha^r = \alpha^{\rm init}
\beta^j$,
\begin{equation}\label{eq:decreaseThmArmijo}
f(x^r) - f(x^r + \alpha^r d^r) \geq - \sigma \alpha^r  f'(x^r;d^r).
\end{equation}

 The following theorem states the
convergence result of the proposed algorithm.
\begin{thm}\label{thm:ConvergenceBSCA}
Suppose that $f(\cdot)$ is continuously differentiable and that
Assumption~\eqref{assptnNotubd} holds. Furthermore, assume
that $h(x,y)$ is strictly convex in $x$ and continuous in $(x,y)$.
Then every limit point of the iterates generated by the BSCA algorithm is a stationary point
of~\eqref{eq:OriginalProblem}.
\end{thm}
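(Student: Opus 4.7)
The plan is to show that every limit point $z$ of $\{x^r\}$ is blockwise stationary, i.e., $\langle \nabla_{x_i} f(z), d_i \rangle \geq 0$ for every feasible $d_i$ and every block $i$. Because $f$ is $C^1$ and $\mathcal{X} = \prod_i \mathcal{X}_i$ is a product of convex sets, summing over blocks gives $\langle \nabla f(z), d \rangle \geq 0$ for every $d$ with $z + d \in \mathcal{X}$, so $z$ is stationary. Fix a subsequence $x^{r_j} \to z$. Since the Armijo rule forces $f(x^{r+1}) \leq f(x^r)$ and continuity gives $f(x^{r_j}) \to f(z)$, the full monotone sequence $\{f(x^r)\}$ converges to $f(z)$; in particular $f(x^r) - f(x^{r+1}) \to 0$.

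By the pigeonhole principle and the cyclic update rule, I restrict to a further subsequence along which the same block $i_0$ is updated at iteration $r_j+1$. Strict convexity of $h_{i_0}(\cdot, x^{r_j})$ makes the subproblem minimizer $y_{i_0}^{r_j+1}$ unique, and joint continuity of $h_{i_0}$ combined with strict convexity yields continuity of the argmin map, so $y_{i_0}^{r_j+1} \to y_{i_0}^{\star} \triangleq \arg\min_{y \in \mathcal{X}_{i_0}} h_{i_0}(y, z)$. The central claim is $y_{i_0}^{\star} = z_{i_0}$. Suppose not; set $d_{i_0}^{\star} = y_{i_0}^{\star} - z_{i_0}$ and $d^{\star} = (0, \dots, d_{i_0}^{\star}, \dots, 0)$. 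By strict convexity and the optimality of $y_{i_0}^{\star}$,
\[
h_{i_0}'(x_{i_0}, z; d_{i_0}^{\star})\big|_{x_{i_0} = z_{i_0}} \leq h_{i_0}(y_{i_0}^{\star}, z) - h_{i_0}(z_{i_0}, z) < 0,
\]
so assumption \eqref{assptnNotubd} yields $f'(z; d^{\star}) = \langle \nabla f(z), d^{\star} \rangle < 0$.

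To derive a contradiction I perform the standard Armijo case analysis on $\alpha^{r_j+1}$. In Case A, if $\liminf_j \alpha^{r_j+1} > 0$, then continuity of $\nabla f$ gives $f'(x^{r_j}; \tilde{d}^{r_j+1}) \to f'(z; d^{\star}) < 0$, so the Armijo inequality forces $f(x^{r_j}) - f(x^{r_j+1})$ to be bounded below by a positive constant, contradicting $f(x^r) - f(x^{r+1}) \to 0$. In Case B, if $\alpha^{r_j+1} \to 0$ along a subsequence, then for large $j$ the candidate $\alpha^{r_j+1}/\beta$ failed the Armijo test, giving
\[
\frac{f(x^{r_j}) - f\bigl(x^{r_j} + (\alpha^{r_j+1}/\beta)\tilde{d}^{r_j+1}\bigr)}{\alpha^{r_j+1}/\beta} < -\sigma\, f'(x^{r_j}; \tilde{d}^{r_j+1}).
\]
Letting $j \to \infty$, the $C^1$ mean-value theorem, together with $\alpha^{r_j+1}/\beta \to 0$ and $\tilde{d}^{r_j+1} \to d^{\star}$, sends the left-hand side to $-f'(z; d^{\star})$, yielding $(1 - \sigma)\, f'(z; d^{\star}) \geq 0$, again a contradiction.

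Hence $y_{i_0}^{\star} = z_{i_0}$, so $\tilde{d}^{r_j+1} \to 0$ and $x^{r_j+1} = x^{r_j} + \alpha^{r_j+1}\, \tilde{d}^{r_j+1} \to z$; moreover the convex first-order optimality of $z_{i_0}$ for $h_{i_0}(\cdot, z)$ combined with \eqref{assptnNotubd} gives blockwise stationarity of $f$ at $z$ in block $i_0$. Since the cyclic schedule visits every block within any $n$ consecutive iterations and I have just shown $x^{r_j+1} \to z$, the identical argument applied successively at iterations $r_j+2, \dots, r_j+n$ delivers blockwise stationarity in the remaining blocks, completing the proof. The principal technical obstacle is the Armijo analysis in Case B, where one must pass to the limit in the secant quotient while both the step size and the descent direction vary; this is exactly where the $C^1$ smoothness of $f$ is used in an essential way.
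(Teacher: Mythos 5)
Your argument follows the same skeleton as the paper's proof: monotone descent from the Armijo rule, passage to a subsequence along which a fixed block is updated, a two-case analysis of the step sizes (bounded away from zero versus vanishing, the latter handled by the failed Armijo trial at $\alpha/\beta$), transfer of the first-order optimality of the convex subproblem to $f$ via \eqref{assptnNotubd}, and propagation through all $n$ blocks using $x^{r_j+1}\to z$. The reorganization is mostly cosmetic: where the paper proves directly that $d^{r_j}\to 0$, you prove that $y_{i_0}^{r_j+1}$ converges to the minimizer of $h_{i_0}(\cdot,z)$ and then that this minimizer equals $z_{i_0}$. Two of your choices are in fact slightly cleaner than the paper's: in Case A you contradict $f(x^r)-f(x^{r+1})\to 0$ directly instead of rerunning the strict-convexity comparison, and at the end you observe that blockwise stationarity sums to full stationarity over a Cartesian product, which replaces the paper's appeal to regularity (automatic here since $f$ is $C^1$).

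The one step you have not justified is the assertion that ``joint continuity combined with strict convexity yields continuity of the argmin map,'' and that is where the real work hides. Since $\mathcal{X}_{i_0}$ need not be bounded, there is no a priori bound on the minimizers $y_{i_0}^{r_j+1}$, and a small linear tilt of a strictly convex function can in principle move its minimizer a long way; one must rule out $\|y_{i_0}^{r_j+1}\|\to\infty$ before any limit point of the minimizers can be identified with $\arg\min h_{i_0}(\cdot,z)$. The standard repair is exactly the paper's device: fix $\delta>0$, note that the point $x_{i_0}^{r_j}+\delta p^{r_j}$ (with $p^{r_j}$ the unit vector toward $y_{i_0}^{r_j+1}$) lies on the segment joining $x_{i_0}^{r_j}$ to the subproblem minimizer, so that convexity gives
\begin{equation*}
h_{i_0}\bigl(x_{i_0}^{r_j}+\delta p^{r_j},x^{r_j}\bigr)\;\le\; h_{i_0}\bigl(x_{i_0}^{r_j},x^{r_j}\bigr),
\end{equation*}
and pass to the limit to contradict the strict minimality of the limiting argmin. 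The paper deploys this normalized-direction argument to prove $d^{r_j}\to 0$ outright and therefore never needs $y^{r_j}$ to converge at all. Your lemma is true and the repair is routine, but as written the proof buries its central technical step inside an unproven claim; you should either prove the argmin-continuity lemma (boundedness first via the segment argument, then uniqueness of limit points via strict convexity) or bypass it as the paper does.
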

\begin{proof}
First of all,  due to the use of Armijo step size selection rule, we have
\[
f(x^r) - f(x^{r+1})  \geq - \sigma \alpha^r  f'(x^r;d^r) \geq 0,
\]
which implies
\begin{equation}\label{eq:tempThmArmijo1}
\lim_{r \rightarrow \infty} \alpha^r f'(x^r;d^r) = 0.
\end{equation}
Consider a limit point $z$ and a subsequence $\{x^{r_j}\}_j$ converging to $z$. Since $\{f(x^r)\}$ is a monotonically decreasing sequence, it follows that
\[
\lim_{r \rightarrow \infty} \; f(x^r) = f(z).
\]
By further restricting to a subsequence if necessary, we can assume without loss of generality that in the subsequence $\{x^{r_j}\}_j$ the first block is updated. 
We first claim that we can further restrict to a further subsequence such that
\begin{equation}\label{eq:tempThmArmijo5}
\lim_{j \rightarrow \infty } d^{r_j}  = 0.
\end{equation}
We prove this by contradiction. Let us assume the contrary so that there exists $\delta, \; 0<\delta<1$ and $\ell \in \{1,2,\ldots\}$
\begin{equation}\label{eq:ContraryThmArmijo}
\|d^{r_j}\| \geq \delta,\;\forall\; j \geq \ell.
\end{equation}
Defining $p^{r_j} = \frac{d^{r_j}}{\|d^{r_j}\|}$, the equation~\eqref{eq:tempThmArmijo1} implies $\alpha^{r_j} \|d^{r_j}\|  f'(x^{r_j};p^{r_j}) \rightarrow 0$. Thus, we have the following two cases:\\
\textbf{Case A:} $  f'(x^{r_j};p^{r_j}) \rightarrow 0$ along a subsequence of $\{x^{r_j}\}$. Let us restrict ourselves to that subsequence. Since $\|p^{r_j}\|=1$, there exists a limit point~$\bar{p}$. By further restricting to a subsequence and using the smoothness of~$f(\cdot)$, we obtain
\begin{equation}\label{eq:tempThmArmijo2}
 f'(z;\bar{p}) = 0.
\end{equation}
Furthermore, due to the strict convexity of $h_1(\cdot,z)$,
\begin{equation}\label{eq:tempThmArmijo3}
h_1(z_1 + \delta \bar{p}_1, z) > h_1(z_1,z) + \delta  h'_1(x_1,z;\bar{p}_1)\bigg|_{x_1 = z_1}\geq h_1 (z_1,z),
\end{equation}
where $\bar{p}_1$ is the first block of $\bar{p}$ and the last step is due to \eqref{eq:tempThmArmijo2} and \eqref{assptnNotubd}. On the other hand, since $x_1^{r_j} + \delta p_1^{r_j}$ lies between $x_1^{r_j}$ and $y_1^{r_j}$, we have
\[
h_1(x_1^{r_j} + \delta p_1^{r_j},x^{r_j}) \leq h_1(x_1^{r_j},x^{r_j}).
\]
Letting $j \rightarrow \infty$ along the subsequence, we obtain
\begin{equation}\label{eq:tempThmArmijo4}
h_1(z_1 + \delta \bar{p}_1, z) \leq h_1 (z_1,z),
\end{equation}
which contradicts \eqref{eq:tempThmArmijo3}.\\
\textbf{Case B:} $\alpha^{r_j} \|d^{r_j}\| \rightarrow 0$ along a subsequence. Let us restrict ourselves to that subsequence. Due to the contrary assumption~\eqref{eq:ContraryThmArmijo},
\[
\lim_{j \rightarrow \infty}\alpha^{r_j} = 0,
\]
which further implies that there exists $j_0 \in \{1,2,\ldots\}$  such that
\[
f(x^{r_j} + \frac{\alpha^{r_j}}{\beta} d^{r_j}) - f(x^{r_j}) > \sigma \frac{\alpha^{r_j}}{\beta}  f'(x^{r_j};d^{r_j}), \;\quad \forall\; j \geq j_0.
\]
Rearranging the terms, we obtain
\[
\frac{f(x^{r_j} + \frac{\alpha^{r_j}}{\beta} \|d^{r_j}\| p^{r_j}) - f(x^{r_j})}{\frac{\alpha^{r_j}}{\beta} \|d^{r_j}\|} > \sigma f'(x^{r_j};p^{r_j}), \quad \forall\; j \leq j_0.
\]
Letting $j\rightarrow \infty$ along the subsequence that $p^{r_j} \rightarrow \bar{p}$, we obtain
\[
 f'(z;\bar{p}) \geq \sigma  f'(z;\bar{p}),
\]
which implies $f(z;\bar{p})  \geq 0$ since $\sigma<1$. Therefore,
using an argument similar to the previous case,
\eqref{eq:tempThmArmijo3} and \eqref{eq:tempThmArmijo4} hold, which
 is a contradiction. Thus, the assumption \eqref{eq:ContraryThmArmijo} must be false and the condition \eqref{eq:tempThmArmijo5} must hold.
On the other hand, $y_1^{r_j}$ is the minimizer of $h_1(\cdot,x^{r_j})$; thus,
\begin{equation}\label{eq:tempThmArmijo6}
h_1(y_1^{r_j},x^{r_j}) \leq h_1 (x_1,x^{r_j}), \;\quad  \forall\; x_1 \in \mathcal{X}_1.
\end{equation}
Note that $y_1^{r_j} = x_1^{r_j} + d_1^{r_j}$. Combining \eqref{eq:tempThmArmijo5} and \eqref{eq:tempThmArmijo6} and letting $j \rightarrow \infty$ yield
\[
h_1 (z_1,z) \leq h_1(x_1, z), \;\quad \forall\; x_1 \in \mathcal{X}_1.
\]
The first order optimality condition and assumption~\eqref{assptnNotubd} imply
\[
f'(z;d) \geq 0, \; \forall\; d = (d_1, 0, \ldots,0) \quad\;{\rm with} \quad\; z_1 + d_1 \in \mathcal{X}_1.
\]
On the other hand, since $d^{r_j} \rightarrow 0$, it follows that
\[
\lim_{j\rightarrow \infty} x^{r_j +1} = z.
\]
Therefore, by restricting ourselves to the subsequence that $d^{r_j} \rightarrow 0$ and repeating the above argument $n$ times, we obtain
\[
f'(z;d) \geq 0,\quad \forall\; d = (0,\ldots, d_k, \ldots,0) \quad \;{\rm with} \quad\; z_k + d_k \in \mathcal{X}_k; \;  k =1,\ldots,n.
\]
Using the regularity of $f(\cdot)$ at point~$z$ completes the proof.
\end{proof}

We remark that the proposed BSCA method is related to the coordinate
gradient descent method \cite{CGDTseng}, in which a strictly convex
second order approximation of the objective function is minimized at
each iteration. It is important to note that the convergence results
of these two algorithm do not imply each other. The BSCA algorithm,
although more general in the sense that the approximation function
could take the form of any strictly convex function that
satisfies~\eqref{assptnNotubd}, only covers the case when the
objective function is smooth. Nevertheless, the freedom provided by
the BSCA to choose a more general approximation function allows one
to better approximate the original function at each iteration.

\section{Overlapping Essentially Cyclic Rule}
\label{sec:EssentiallyCyclic}

In both the BSUM and the BSCA algorithms considered in the previous sections,
variable blocks are updated in a simple cyclic manner. In
this section, we consider a very general block scheduling rule
named the overlapping essentially cyclic rule and show they still ensure the convergence
of the BSUM and the BSCA algorithms.

In the so called overlapping essentially cyclic rule, at each iteration $r$, a group
$\vartheta^r$ of the variables is chosen to be updated where
\[
\vartheta^r \subseteq \{1,2,\ldots, n\} \quad {\rm and} \quad \vartheta^r \neq \emptyset.
\]
Furthermore, we assume that the update rule is essentially cyclic with period $T$, i.e.,
\[
\bigcup_{i=1}^T \vartheta^{r+i} = \{1,2,\ldots,n\}, \quad \forall\; r.
\]
Notice that this update rule is more general than the essentially cyclic rule since the blocks are allowed to
have overlaps. Using the overlapping essentially cyclic update rule,
almost all the convergence results presented so far still hold. For
example, the following corollary extends the convergence of BSUM to
the overlapping essentially cyclic case.

\begin{coro}\label{coro:ConvergenceBSUMConvex}
\begin{itemize}
\item[]
\item[(a)] Assume that the function $u_i(x_i,y)$ is quasi-convex in~$x_i$ and Assumption~\ref{AssumptionB} is satisfied. Furthermore, assume that the overlapping essentially cyclic update rule is used and  the subproblem~\eqref{eq:BUpperBound} has a unique solution for every block $\vartheta^r$. Then, every limit point~$z$ of the iterates generated by the BSUM algorithm is a coordinatewise minimum of~\eqref{eq:OriginalProblemBlock}. In addition, if $f(\cdot)$ is regular at~$z$ with respect to the updated blocks, then $z$ is a stationary point of~\eqref{eq:OriginalProblemBlock}.
\item[(b)] Assume the level set $\mathcal{X}^0 = \{x \mid f(x) \leq f(x^0)\}$ is compact and Assumption~\ref{AssumptionB} is satisfied. Furthermore, assume that the overlapping essentially cyclic update rule is used and the subproblem~\eqref{eq:BUpperBound} has a unique solution for every block~$\vartheta^r$. If $f(\cdot)$ is regular (with respect to the updated blocks) at every point in the set of stationary points~$\mathcal{X}^*$, then the iterates generated by the BSUM algorithm converges to the set of stationary points, i.e.,
\[
\lim_{r \rightarrow \infty} \quad d(x^r,\mathcal{X}^*) = 0.
\]
\end{itemize}
\end{coro}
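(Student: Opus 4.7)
The plan is to adapt both parts of the proof of Theorem~\ref{thm:ConvergenceBSUMConvex} to the overlapping essentially cyclic setting, replacing the pigeonhole argument ``some block is updated infinitely often in $\{r_j\}$'' by a window argument built from the period-$T$ covering property. Concretely, given any convergent subsequence $x^{r_j} \to z$, I would exploit two facts: (i) within any window of $T$ consecutive iterations every block index appears in some $\vartheta^r$; and (ii) the number of possible update patterns $(\vartheta^{r_j+1}, \ldots, \vartheta^{r_j+T})$ is finite. By restricting to a further subsequence I may freeze this pattern, so that for each block $i \in \{1, \ldots, n\}$ there is a single offset $k_i \in \{1, \ldots, T\}$ with $i \in \vartheta^{r_j + k_i}$ for every $j$.

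For part (a), the main technical step is to show by induction on $k = 0, 1, \ldots, T$ that $x^{r_j + k} \to z$ as $j \to \infty$. The base case $k = 0$ is the hypothesis. For the induction step, assuming $x^{r_j+k-1} \to z$, I would suppose for contradiction that some component of $x^{r_j+k}$ stays a bounded distance away from the corresponding component of $z$, define the normalized direction $s^{r_j+k}$ as in the proof of Theorem~\ref{thm:ConvergenceBSUMConvex}(a), pass to a limit direction $\bar s$, and then use quasi-convexity of the approximation function together with \eqref{B1}, \eqref{B2}, and the non-increasing property $f(x^r) \to f(z)$ to conclude that $z_{\vartheta^{r_j+k}}$ and $z_{\vartheta^{r_j+k}} + \epsilon \bar\gamma \bar s$ are both minimizers of the subproblem at $z$. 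This contradicts the uniqueness assumption, which in this corollary is imposed for every block $\vartheta^r$. Once $x^{r_j+k} \to z$ is established for every $k \leq T$, taking the limit in the optimality inequality at iteration $r_j + k_i$ yields $u_i(z_i, z) \leq u_i(x_i, z)$ for every feasible $x_i$, and the directional derivative inequality combined with \eqref{B3} and regularity of $f$ at $z$ (with respect to the updated blocks, which is all of them) delivers stationarity.

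For part (b), I would mirror the proof of Theorem~\ref{thm:ConvergenceBSUMConvex}(b) but over a window of length $T$ rather than $n$. Compactness of $\mathcal{X}^0$ lets me extract a further subsequence along which $x^{r_j + k} \to z^k$ for each $k = 0, 1, \ldots, T$, with $z^0 = z$. Applying \eqref{B2} at each step and using that $f$ is non-increasing with a common limit yields $f(z^0) = f(z^1) = \cdots = f(z^T) = f(z)$, and this collapses the inequalities in the chain into equalities $u_{\vartheta^{r_j+k}}(z^k_{\vartheta^{r_j+k}}, z^{k-1}) = u_{\vartheta^{r_j+k}}(z^{k-1}_{\vartheta^{r_j+k}}, z^{k-1})$. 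The hypothesis that the subproblem admits a unique minimizer for every $\vartheta^r$ then forces $z^0 = z^1 = \cdots = z^T = z$. First-order optimality at the $T$ iterations, combined via the essentially cyclic property to cover all $n$ blocks, plus \eqref{B3} and regularity of $f$ at $z$, finish the argument that $d(x^r, \mathcal{X}^*) \to 0$.

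The main obstacle I anticipate is the induction in part (a): the original proof only had to handle the single transition $x^{r_j} \to x^{r_j + 1}$, whereas here I must chain the normalization-plus-uniqueness argument through $T$ successive transitions, each governed by a possibly different update set $\vartheta^{r_j+k}$. The pattern-freezing restriction on the subsequence is what keeps the inductive step well defined, and assuming uniqueness of the subproblem minimizer for every $\vartheta^r$ (rather than only for individual coordinate blocks) is precisely what allows the contradiction to close at each step.
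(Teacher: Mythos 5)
Your proposal is correct and follows essentially the same route as the paper: the paper likewise restricts to a further subsequence along which the $T$-tuple of updated blocks $(\vartheta_0,\ldots,\vartheta_{T-1})$ is frozen (by finiteness of the possible patterns) and then repeats the steps of the proof of Theorem~\ref{thm:ConvergenceBSUMConvex} over that window in place of the cyclic blocks $(1,\ldots,n)$. Your explicit induction establishing $x^{r_j+k}\to z$ for $k=0,1,\ldots,T$ merely spells out the chaining that the paper leaves implicit.
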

\begin{proof}
The proof of both cases are similar to the proof of the BSUM algorithm with the simple cyclic update rule. Here we only present the proof for case (a). The proof of part (b) is similar.

Let $\{x^{r_j}\}$ be a convergent subsequence whose limit is denoted by~$z$. Consider every $T$ updating cycle along the subsequence $\{x^{r_j}\}$, namely, $\{(x^{r_j},x^{r_j +1},\ldots, x^{r_j+T-1})\}$. Since the number of different subblocks $\vartheta^r$ is finite, there must exist a (fixed) $T$ tuple of variable blocks, say $(\vartheta_0, \vartheta_1,\ldots, \vartheta_{T-1})$, that has been updated in infinitely many $T$ updating cycles. By restricting to the corresponding subsequence of $\{x^{r_j}\}$, we have
\[
x^{r_j+i+1}_{\vartheta_i} = \arg \min_{x_{\vartheta_i}} \; u_{\vartheta_i} (x_{\vartheta_i},x^{r_j + i }),\quad \forall\; i =0, 1,2,\ldots, T-1.
\]
The rest of the proof is the same as the proof of part (a) in Theorem~\ref{thm:ConvergenceBSUMConvex}. The only difference is that the steps of the proof need to be repeated for the blocks $(\vartheta_0, \vartheta_1,\ldots, \vartheta_{T-1})$ instead of $(1,\ldots,n)$.
\end{proof}

In the proof of Corollary~\ref{coro:ConvergenceBSUMConvex}, we first restrict ourselves to a fixed set of $T$ variable blocks that have been updated in infinitely many consecutive $T$ update cycles. Then, we use the same approach as in the proof of the convergence of cyclic update rule. Using the same technique, we can extend the results in Theorem~\ref{thm:ConvergenceBSCA} to the overlapping essentially cyclic update rule. More specifically, we have the following corollary.

\begin{coro}
Assume $f(\cdot)$ is smooth and the condition~\eqref{assptnNotubd} is satisfied. Furthermore, assume that $h(x,y)$ is strictly convex in $x$ and the overlapping essentially cyclic update rule is used in the BSCA algorithm. Then every limit point of the iterates generated by the BSCA algorithm is a stationary point of~\eqref{eq:OriginalProblem}.
\end{coro}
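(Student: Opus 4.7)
The plan is to adapt the proof of Theorem~\ref{thm:ConvergenceBSCA} using the same device that Corollary~\ref{coro:ConvergenceBSUMConvex} used to extend Theorem~\ref{thm:ConvergenceBSUMConvex} to the overlapping essentially cyclic rule: reduce to a subsequence along which one fixed ordered $T$-tuple of block subsets is updated in every period-$T$ window, then apply the single-subset BSCA argument one subset at a time.

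First, I would note that the Armijo stepsize rule alone, exactly as in the original proof, gives $\lim_{r\to\infty}\alpha^r f'(x^r;d^r)=0$, independently of which subset $\vartheta^r$ was selected at each step. Let $z$ be any limit point and $\{x^{r_j}\}\to z$ a convergent subsequence, so that $f(x^{r_j})\to f(z)$ by monotonicity. Examine the period-$T$ window $(\vartheta^{r_j+1},\vartheta^{r_j+2},\ldots,\vartheta^{r_j+T})$ starting at each $r_j$. Since each $\vartheta^r$ lies in the finite collection of non-empty subsets of $\{1,\ldots,n\}$, there are only finitely many possible $T$-tuples; by pigeonhole, a further subsequence (still called $\{r_j\}$) has the same fixed tuple $(\vartheta_0,\vartheta_1,\ldots,\vartheta_{T-1})$ in every window, and the essentially cyclic hypothesis forces $\bigcup_{i=0}^{T-1}\vartheta_i=\{1,\ldots,n\}$.

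I would then run the Theorem~\ref{thm:ConvergenceBSCA} argument through these $T$ updates successively. At sub-iteration $r_j+1$, subset $\vartheta_0$ is updated: compute $y^{r_j+1}_{\vartheta_0}\in\arg\min h_{\vartheta_0}(\cdot,x^{r_j})$, form $d^{r_j+1}$, and repeat the two-case contradiction of the original proof (Case~A with $f'(x^{r_j+1};p^{r_j+1})\to 0$, Case~B with $\alpha^{r_j+1}\to 0$ together with the failure of Armijo at the next-larger candidate step). In both cases, strict convexity of $h_{\vartheta_0}(\cdot,z)$ at the limit point contradicts $\|d^{r_j+1}\|\geq\delta$, so $d^{r_j+1}\to 0$. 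Passing to the limit in the optimality inequality for $y^{r_j+1}$ then yields $h_{\vartheta_0}(z_{\vartheta_0},z)\leq h_{\vartheta_0}(x_{\vartheta_0},z)$ for every feasible $x_{\vartheta_0}$, and combining first-order optimality with~\eqref{assptnNotubd} gives $f'(z;d)\geq 0$ for every direction $d$ supported on $\vartheta_0$. Because $d^{r_j+1}\to 0$, we also have $x^{r_j+1}\to z$, so the same argument based at $x^{r_j+1}$ applied to $\vartheta_1$ yields the analogous conclusion, and iterating exhausts $\vartheta_0,\vartheta_1,\ldots,\vartheta_{T-1}$.

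Since the subsets $\vartheta_i$ cover $\{1,\ldots,n\}$, we obtain $f'(z;d)\geq 0$ for every $d$ supported on any single block $k$, and smoothness of $f$ (which implies regularity coordinatewise) promotes this to $f'(z;d)\geq 0$ for every feasible $d$, establishing stationarity of $z$. The main technical obstacle I anticipate is propagating $x^{r_j+i}\to z$ through the $T$ consecutive sub-iterations so that the strict convexity of $h_{\vartheta_i}(\cdot,z)$ is applicable at each step with the same basepoint $z$; the step-by-step conclusion $d^{r_j+i}\to 0$ is exactly what makes this propagation go through, and carefully coupling the vanishing directions to the drifting basepoint across all $T$ updates is the part that needs to be written most carefully.
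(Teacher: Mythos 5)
Your proposal is correct and follows exactly the route the paper intends: it combines the pigeonhole reduction to a fixed $T$-tuple of updated subsets from the proof of Corollary~\ref{coro:ConvergenceBSUMConvex} with a subset-by-subset repetition of the two-case Armijo argument of Theorem~\ref{thm:ConvergenceBSCA}, using $d^{r_j+i}\to 0$ to propagate the base point $z$ across the window. The paper omits the details entirely (it only remarks that the same technique applies), and you have correctly identified the one point that genuinely needs care, namely coupling the vanishing search directions to the drifting iterates over the $T$ consecutive sub-iterations.
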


Notice that the overlapping essentially cyclic rule
is not applicable to the MISUM algorithm in which the update order of
the variables is given by the amount of improvement. However, one
can simply check that the proof of
Theorem~\ref{thm:ConvergenceMISUM} still applies to the case when
the blocks are allowed to have overlaps.

\section{Applications}
\label{sec:Applications}
In this section, we provide several applications of the algorithms
proposed in the previous sections.

\subsection{Linear Transceiver Design in Cellular
Networks}\label{subCellular}

Consider a $K$-cell wireless network where each base station~$k$ serves a set
$\mathcal{I}_k$ of users (see Fig. \ref{figIBC} for an
illustration). Let $i_k$ denote the $i$-th receiver in cell~$k$. For
simplicity, suppose that the users and the base stations are all
equipped with $N$ antennas. Let us define the set of all users as
$\mathcal{I} =\{i_k \mid 1\le k \le K, \; i \in \mathcal{I}_k\}.$
Let $d_{i_k}$ denote the number of data symbols transmitted
simultaneously to user $i_k$.
\begin{figure}[ht!]
 \centering
\includegraphics[width=0.4\linewidth]{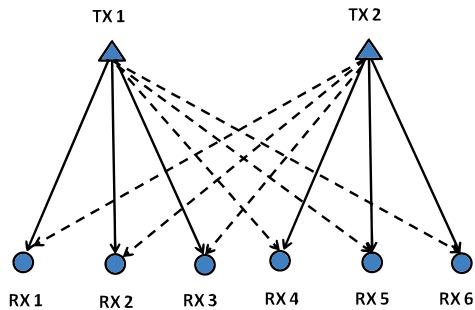}
\caption{\small The cellular network
model considered in Section \ref{subCellular}. The solid lines
represent the direct channels, while the dotted lines represent the
interfering channels.}\label{figIBC}
\end{figure}

When linear transceivers are used at the base stations and the
users, user~$i_k$'s received signal vector, denoted as
$\by_{i_k}\in\mathbb{C}^N$, can be written as
\begin{align}
\by_{i_k} &=
\underbrace{\bH_{i_kk}\bV_{i_k}\bs_{i_k}}_{\textrm{desired
signal}}+\underbrace{\sum_{\ell \neq i,
\ell=1}^{I_k}\bH_{i_kk}\bV_{\ell_k}\bs_{\ell_k}}_{\textrm{intracell
interference}}+\underbrace{\sum_{j\neq k, j=1}^K \sum_{\ell =
1}^{I_j}
\bH_{i_kj}\bV_{\ell_j}\bs_{\ell_j}+\bn_{i_k}}_{\textrm{intercell
interference plus noise}}, \;\forall\; i_k \in \mathcal{I}, \nonumber
\end{align}
where $\bV_{i_k}\in\mathbb{C}^{M\times d_{i_k}}$ is the linear
transmit beamformer used by base station $k$ for user $i_k$;
$\bs_{i_k}\in\mathbb{C}^{d_{i_k}\times1}$ is user $i_k$'s data
signal. The matrix $\bH_{i_kj}$ represents the channel from
transmitter~$j$ to receiver~$i_k$, and $\bn_{i_k}$ denotes the
complex additive white Gaussian noise with distribution
$\mathcal{CN}(0,\sigma_{i_k}^2 \bI)$. User~$i_k$ estimates the
intended message using a linear beamformer ~$\bU_{i_k}\in
\mathbb{C}^{M \times d_{i_k}}$: $\hat{\bs}_{i_k} = \bU^H_{i_k}
\by_{i_k}.$

Treating interference as noise, the rate of user~$i_k$ is given
by{\small
\begin{align}
R_{i_k}(\bV)&= \log \det \bigg(\bI +  \bH_{i_k k}\bV_{i_k}
(\bV_{i_k})^H \bH_{i_k k}^H  \bigg( \sigma_{i_k}^2 \bI +
\sum_{(j,\ell) \neq (k,i)} \bH_{i_k j}\bV_{\ell_j} (\bV_{\ell_j})^H
\bH_{i_k j}^H \bigg)^{-1} \bigg). \nonumber
\end{align}}
We are interested in finding the beamformers $\bV$ such that the sum
of the users' rates are optimized
\begin{equation}
\label{eq:sum_utility_MIMO_IA}
\begin{split}
\max_{\{\bV_{i_k}\}_{i_k\in \mathcal{I}}} \quad &\sum_{k=1}^K\sum_{i=1}^{I_k} R_{i_k}(\bV)\\
\textrm{s.t.}\quad &\sum_{i=1}^{I_k}\tr(\bV_{i_k} \bV_{i_k}^H)\leq
\bar{P}_k, \;\; \forall\; k \in \mathcal{K}.
\end{split}
\end{equation}
Note that we have included a transmit power constraint for each base
station. It has been shown in \cite{luo08a} that solving \eqref{eq:sum_utility_MIMO_IA} is NP-hard. Therefore, we try to obtain the stationary solution for this problem. Furthermore,  we can no longer
straightforwardly apply the BSUM algorithm that updates
$\bV_{i_k}$'s cyclically. This is due to the fact that the users in
the set $\mathcal{I}_k$ share a common power constraint. Thus the
requirement for the separability of the constraints for different
block components in \eqref{eq:OriginalProblemBlock} is not
satisfied.

To devise an efficient and low complexity algorithm
for problem \eqref{eq:sum_utility_MIMO_IA}, we will
first transform this problem to a more suitable form. We first
introduce the function $f_{i_k} (\bU_{i_k}, \bV) \triangleq \log
\det \left(\bE_{i_k}^{-1}\right)$, where $\bE_{i_k}$ is the mean
square error (MSE) matrix given as{\small
\begin{equation}\nonumber
\begin{split}
\bE_{i_k} \triangleq(\bI-\bU_{i_k}^H \bH_{i_kk} \bV_{i_k})(\bI-
\bU_{i_k}^H \bH_{i_kk}\bV_{i_k})^H + \sum_{(\ell,j)\neq (i,k)}
\bU_{i_k}^H \bH_{i_kj} \bV_{\ell_j}\bV_{\ell_j}^H \bH_{i_kj}^H
\bU_{i_k}+\sigma_{i_k}^2\bU_{i_k}^H \bU_{i_k}.
\end{split}
\end{equation}}
In the subsequent presentation we will occasionally use the notation
$\bE_{i_k}(\bU_{i_k}, \bV)$ to make the dependency of the MSE matrix
and the transceivers explicit.

Taking the derivative of~$f_{i_k}(\bU_{i_k}, \bV)$ with respect
to~$\bU_{i_k}$ and checking the first order optimality condition, we
have{\small
\begin{align}
\arg \max_{\bU_{i_k}}f_{i_k} (\bU_{i_k}, \bV) = \bigg(
\sigma_{i_k}^2 \bI  + \sum_{(j,\ell)} \bH_{i_k j}\bV_{\ell_j}
\bV_{\ell_j}^H \bH_{i_k j}^H \bigg)^{-1} \bH_{i_k k}.
\bV_{i_k}\nonumber
\end{align}}
Plugging in the optimal value of~$\bU_{i_k}$ in~$f_{i_k}(\cdot)$, we
obtain $\max_{\bU_{i_k}}~ f_{i_k} (\bU_{i_k}, \bV) 
=R_{i_k}(\bV)\nonumber.$
Thus, we can rewrite the optimization
problem equivalently \eqref{eq:sum_utility_MIMO_IA} as\footnote{Such
equivalence is in the sense of one-to-one correspondence of both local and global
optimal solutions. See \cite{WMMSETSP} for a detailed argument. }
\begin{equation}
\label{eq:sum_utility__Multi1}
\begin{split}
\min_{\bV,\bU} \quad &\sum_{k=1}^K \sum_{i=1}^{I_k} \log\det(\bE_{i_k}) \\
\st \quad & \sum_{i=1}^{I_k}\tr\,(\bV_{i_k}\bV_{i_k}^H)\leq P_k,
\,\, \forall\; k \in\mathcal{K}.
\end{split}
\end{equation}

Notice the fact that the function $\log\det(\cdot)$ is a {\it
concave} function on its argument (see, e.g., \cite{boyd04}), then
for any feasible $\bE_{i_k}$, $\widehat{\bE}_{i_k}$, we have
\begin{align}
\log\det(\bE_{i_k})&\le
\log\det(\widehat{\bE}_{i_k})+\tr[\triangledown_{\bE_{i_k}}(\log\det(\widehat{\bE}_{i_k}))(\bE_{i_k}-\widehat{\bE}_{i_k})]\nonumber\\
&=\log\det(\widehat{\bE}_{i_k})+\tr[\widehat{\bE}^{-1}_{i_k}(\bE_{i_k}-\widehat{\bE}_{i_k})]
\triangleq u_{i_k}(\bE_{i_k},
\widehat{\bE}_{i_k})\label{eq:EUpperBound}
\end{align}

Utilizing the above transformation and the upper bound, we can again
apply the BSUM algorithm. Let $\bV$ and $\bU$ be two block
variables. Define
\begin{align}
u_{\bv}\left(\bV,(\widehat{\bV},\widehat{\bU})\right)&\triangleq\sum_{k=1}^K
\sum_{i=1}^{I_k} u_{i_k}(\bE_{i_k}(\bV, \widehat{\bU}_{i_k}),
{\bE}_{i_k}(\widehat{\bV},
\widehat{\bU}_{i_k}))\nonumber\\
u_{\bu}\left(\bU,(\widehat{\bV},\widehat{\bU})\right)&\triangleq\sum_{k=1}^K
\sum_{i=1}^{I_k} u_{i_k}(\bE_{i_k}(\widehat{\bV}, \bU_{i_k}),
{\bE}(\widehat{\bV}, \widehat{\bU}_{i_k}))\nonumber
\end{align}
In iteration $2r+1$, the algorithm solves the following problem
\begin{equation}
\label{eq:solving_V}
\begin{split}\min_{\bV} &\quad
u_\bv\left(\bV,(\bV^{2r},\bU^{2r})\right)\\
 &\quad\sum_{i=1}^{I_k}\tr\,(\bV_{i_k}\bV_{i_k}^H)\leq
P_k, \,\, \forall\; k \in\mathcal{K}.
\end{split}
\end{equation}

In iteration $2r+2$, the algorithm solves the following
(unconstrained) problem
\begin{equation}
\label{eq:solving_U}
\begin{split}
\min_{\bU} \quad &u_{\bu}\left(\bU,(\bV^{2r+1},\bU^{2r})\right)
\end{split}
\end{equation}
The above BSUM algorithm for solving \eqref{eq:sum_utility_MIMO_IA} is called WMMSE algorithm in the reference \cite{WMMSETSP}.

Due to \eqref{eq:EUpperBound}, we must have that
\begin{align}
&u_\bv\left(\bV,(\bV^{2r},\bU^{2r})\right)\ge \sum_{k=1}^K \sum_{i=1}^{I_k}\log\det(\bE_{i_k}(\bV,\bU^{2r}_{i_k})),\quad {\rm for~all~feasible~}\bV,~\forall~i_k\nonumber\\
&u_{\bu}\left(\bU,(\bV^{2r+1},\bU^{2r})\right)\ge \sum_{k=1}^K
\sum_{i=1}^{I_k}\log\det(\bE_{i_k}(\bV^{2r+1},\bU_{i_k}))\quad {\rm
for~all~}\bU_{i_k},~\forall~i_k\nonumber
\end{align}
Moreover, other conditions in Assumption \ref{AssumptionB} are also
satisfied for $u_{\bv}(\cdot)$ and $u_{\bu}(\cdot)$. Thus the
convergence of the WMMSE algorithm to a stationary solution of problem
\eqref{eq:sum_utility__Multi1} follows directly from Theorem~\ref{thm:ConvergenceBSUMConvex}.

We briefly mention here that the main benefit of using the BSUM
approach for solving problem \eqref{eq:sum_utility__Multi1} is that
in each step, the problem \eqref{eq:solving_V} can be decomposed
into $K$ independent convex subproblems, one for each base station
$k\in\mathcal{K}$. Moreover, the solutions for these $K$ subproblems
can be simply obtained in closed form (subject to an efficient
bisection search). For more details on this algorithm, we refer the
readers to \cite{WMMSETSP} and \cite{GroupingRazaviyayn}.

The BSUM approach has been extensively used for resource allocation
in wireless networks, for example
\cite{ILAGinnakisKim,PowerAllocationGeometric,Shi:2009,Ng10,hong12_icassp},
and \cite{InterferencepricingShi}. However, the convergence of most
of the algorithms was not rigorously established.

\subsection{Proximal Minimization Algorithm}
The classical proximal minimization algorithm (see, e.g.,
\cite[Section 3.4.3]{Bertsekas_Book_Distr}) obtains a solution of
the problem $\min_{\bx\in\mathcal{X}}f(\bx)$ by solving an
equivalent problem
\begin{align}
\min_{\bx\in\mathcal{X},\by\in\mathcal{X}}f(\bx)+\frac{1}{2c}\|\bx-\by\|_2^2,\label{problemProximal}
\end{align}
where $f(\cdot)$ is a convex function, $\mathcal{X}$ is a closed
convex set, and $c>0$ is a scalar parameter. The equivalent problem
\eqref{problemProximal} is attractive in that it is strongly convex
in both $x$ and $y$ (but not jointly) so long as $f(x)$ is convex.
This problem can be solved by performing the following two steps in
an alternating fashion
\begin{align}
\bx^{r+1}&=\arg\min_{\bx\in\mathcal{X}}\left\{f(x)+\frac{1}{2c}\|\bx-\by^r\|^2_2\right\}\label{eq:ProximalComputationX}\\
\by^{r+1}&=\bx^{r+1}\label{eq:ProximalComputationY}.
\end{align}
Equivalently, let $u(\bx;\bx^r)\triangleq
f(\bx)+\frac{1}{2c}\|\bx-\bx^r\|_2^2$, then the iteration
\eqref{eq:ProximalComputationX}--\eqref{eq:ProximalComputationY} can
be written as
\begin{align}
\bx^{r+1}=\arg\min_{\bx\in\mathcal{X}}u(\bx,\bx^r).\label{eq:ProximalEquivalence}
\end{align}
It can be straightforwardly checked that for all $\bx,
\bx^r\in\mathcal{X}$, the function $u(\bx, \bx^r)$ serves as an
upper bound for the function $f(\bx)$. Moreover, the conditions
listed in Assumption \ref{AssumptionA} are all satisfied. Clearly,
the iteration \eqref{eq:ProximalEquivalence} corresponds to the SUM
algorithm discussed in Section \ref{sec:SUM}. Consequently, the
convergence of the proximal minimization procedure can be obtained
from Theorem \ref{thm:ConvergenceSUMConvex}.

The proximal minimization algorithm can be generalized in the
following way. Consider the problem
\begin{align}
\min_{\bx}&\quad f(\bx_1,\cdots,\bx_n)\label{problemProximalGeneral}\\
{\rm s.t.}&\quad \bx_i\in\mathcal{X}_i,~i=1,\cdots,n,\nonumber
\end{align}
where $\{\mathcal{X}_i\}_{i=1}^{n}$ are closed convex sets,
$f(\cdot)$ is convex in each of its block components, but not
necessarily strictly convex. A straightforward application of the
BCD procedure may fail to find a stationary solution for this
problem, as the per-block subproblems may contain multiple
solutions. Alternatively, we can consider an {\it alternating
proximal minimization} algorithm \cite{ProximalBCD}, in each
iteration of which the following subproblem is solved
\begin{align}
\min_{\bx_i}&\quad f(\bx^r_1,\ldots, \bx_{i-1}^r,\bx_i,\bx_{i+1}^r ,\ldots, \bx^r_n)+\frac{1}{2c}\|\bx_i-\bx^r_i\|^2_2\\
{\rm s.t.}&\quad \bx_i\in\mathcal{X}_i\nonumber.
\end{align}

It is not hard to see that this subproblem always admits a unique
solution, as the objective is a strictly convex function of
$\bx_i$. Let $u_i(\bx_i,\bx^r)\triangleq f(\bx^r_1,\cdots,\bx_i,
\cdots \bx^r_n)+\frac{1}{2c}\|\bx_i-\bx^r_i\|^2_2$. Again for each
$\bx_i\in\mathcal{X}_i$ and
$\bx^r\in\mathcal\prod_{j}\mathcal{X}_j$, the function
$u_i(\bx_i,\bx^r)$ is an upper bound of the original objective
$f(\bx)$. Moreover, all the conditions in Assumption
\ref{AssumptionB} are satisfied. Utilizing Theorem
\ref{thm:ConvergenceBSUMConvex}, we conclude that the alternating
proximal minimization algorithm must converge to a stationary
solution of the problem \eqref{problemProximalGeneral}. Moreover,
our result extends those in \cite{ProximalBCD} to the case of
nonsmooth objective function as well as the case with
iteration-dependent coefficient $c$. The latter case, which was also studied in the contemporary work \cite{WataoYinBCD}, will be demonstrated in an example for tensor decomposition shortly.

\subsection{Proximal Splitting Algorithm}
The proximal splitting algorithm
(see, e.g., \cite{Combettes09}) for nonsmooth optimization is also a
special case of the BSUM algorithm. Consider the following problem
\begin{align}
\min_{\bx\in\mathcal{X}}f_1(\bx)+f_2(\bx)\label{problemProximalSplitting}
\end{align}
where $\mathcal{X}$ is a closed and convex set. Furthermore, $f_1$
is convex and lower semicontinuous; $f_2$ is convex and has
Lipschitz continuous gradient, i.e., $\|\nabla f_2(\bx)-\nabla
f_2(\by)\|\le \beta \|\bx-\by\|$, $\forall~\bx,\by\in\mathcal{X}$
and for some $\beta>0$.

Define the proximity operator ${\rm
prox}_{f_i}:\mathcal{X}\to\mathcal{X}$ as
\begin{align}
{\rm
prox}_{f_i}(\bx)=\arg\min_{\by\in\mathcal{X}}f_i(\by)+\frac{1}{2}\|\bx-\by\|^2.
\end{align}
The following forward-backward splitting iteration can be used to
obtain a solution for problem \eqref{problemProximalSplitting}
\cite{Combettes09}:
\begin{align}
\bx^{r+1}=\underbrace{{\rm prox}_{\gamma f_1}}_{\rm backward~
step}\underbrace{(\bx^r-\gamma\nabla f_2(\bx^r))}_{\rm
forward~step}\label{eq:forwardbackward}
\end{align}
where $\gamma\in[\epsilon,2/\beta-\epsilon]$ with $\epsilon\in
]0,\min\{1,1/\beta\}[$. Define
\begin{align}
u(\bx,\bx^r)\triangleq
f_1(\bx)+\frac{1}{2\gamma}\|\bx-\bx^r\|^2+\langle\bx-\bx^r,\nabla
f_2(\bx^r)\rangle+f_2(\bx^r).
\end{align}
We first show that the iteration \eqref{eq:forwardbackward} is
equivalent to the following iteration
\begin{align}
\bx^{r+1}=\arg\min_{\bx\in\mathcal{X}}u(\bx,\bx^r).
\end{align}
From the definition of the prox operation, we have
\begin{align}
{\rm prox}_{\gamma f_1}(\bx^r-\gamma\nabla
f_2(\bx^r))&=\arg\min_{\bx\in\mathcal{X}}\gamma
f_1(\bx)+\frac{1}{2}\|\bx-\bx^r+\gamma\nabla
f_2(\bx^r)\|^2_2\nonumber\\
&=\arg\min_{\bx\in\mathcal{X}}f_1(\bx)+\frac{1}{2\gamma
}\|\bx-\bx^r\|_2^2+\langle\bx-\bx^r,\nabla f_2(\bx^r)\rangle\nonumber\\
&=\arg\min_{\bx\in\mathcal{X}}u(\bx,\bx^r).\nonumber
\end{align}

We then show that $u(\bx, \bx^r)$ is an upper bound of the original
function $f_1(\bx)+f_2(\bx)$, for all $\bx,\bx^r\in\mathcal{X}$.
Note that from the well known Descent Lemma \cite[Proposition
A.32]{Bertsekas_Book_Nonlinear}, we have that
\begin{align}
f_2(\bx)&\le
f_2(\bx^r)+\frac{\beta}{2}\|\bx-\bx^r\|^2+\langle\bx-\bx^r,\nabla
f_2(\bx^r)\rangle\nonumber\\
&\le
f_2(\bx^r)+\frac{1}{2\gamma}\|\bx-\bx^r\|^2+\langle\bx-\bx^r,\nabla
f_2(\bx^r)\rangle\nonumber
\end{align}
where the second inequality is from the definition of $\gamma$.
This result implies that $u(\bx,\by)\ge
f_1(\bx)+f_2(\bx),~\forall~\bx,\by\in\mathcal{X}$. Moreover, we can
again verify that all the other conditions in Assumption
\ref{AssumptionA} is true. Consequently, we conclude that the
forward-backward splitting algorithm is a special case of the SUM
algorithm.

Similar to the previous example, we can generalize the
forward-backward splitting algorithm to the problem with multiple
block components. Consider the following problem
\begin{align}
\min&\quad
\sum_{i=1}^{n}f_i(\bx_i)+f_{n+1}(\bx_1,\cdots,\bx_n)\label{problemProximalSplittingGeneral}\\
{\rm s.t.}&\quad\bx_i\in\mathcal{X}_i,i=1,\cdots,n\nonumber
\end{align}
where $\{\mathcal{X}_i\}_{i=1}^{n}$ are a closed and convex sets.
Each function $f_i(\cdot)$, $i=1,\cdots n$ is convex and lower
semicontinuous w.r.t. $\bx_i$; $f_{n+1}(\cdot)$ is convex and has
Lipschitz continuous gradient w.r.t. each of the component $\bx_i$,
i.e., $\|\nabla f_{n+1}(\bx)-\nabla f_{n+1}(\by)\|\le \beta_i
\|\bx_i-\by_i\|$, $\forall~\bx_i,\by_i\in\mathcal{X}_i,
i=1,\cdots,n$. Then the following block forward-backward splitting
algorithm can be shown as a special case of the BSUM algorithm, and
consequently converges to a stationary solution of the problem
\eqref{problemProximalSplittingGeneral}
\begin{align}
\bx_i^{r+1}={{\rm prox}_{\gamma
f_i}}{(\bx_i^r-\gamma\nabla_{\bx_i} f_{n+1}(\bx^r))},\quad i=1,2,...,n,\nonumber
\end{align}
where $\gamma\in[\epsilon_i,2/\beta_i-\epsilon_i]$ with
$\epsilon_i\in ]0,\min\{1,1/\beta_i\}[$.

\subsection{CANDECOMP/PARAFAC Decomposition of Tensors}
Another application of the proposed method is in CANDECOMP/PARAFAC (CP) decomposition of tensors. Given a tensor $\mathfrak{X} \in \re^{m_1 \times m_2 \times \ldots \times m_n}$ of order $n$, the idea of CP decomposition is to write the tensor as the sum of rank-one tensors:
\[
\mathfrak{X} = \sum_{r=1}^R \mathfrak{X}_r,
\]
where $\mathfrak{X}_r = a_{1r} \circ a_{2r} \circ \ldots \circ a_{nr}$ and $a_{ir} \in \mathbb{R}^{m_i}$. Here the notation $``\circ"$ denotes the outer product.\\

In general, finding the CP decomposition of a given tensor is
NP-hard \cite{HastadTensorRankNPhard}. In practice, one of the most
widely accepted algorithms for computing the CP decomposition of a
tensor is the Alternating Least Squares (ALS) algorithm
\cite{TensorReviewKolda,CPDecompositionFaber,CPDecompositionTomasi}.
The ALS algorithm proposed in
\cite{CPFirstWorkCarroll,CPFirstWorkHarshman} is in essence a BCD
method. For ease of presentation, we will present the ALS
algorithm only for tensors of order three.\\

Let $\mathfrak{X} \in \mathbb{R}^{I\times J\times K}$ be a third
order tensor. Let $(A;B;C)$ represent the following decomposition
\[
(A;B;C) \triangleq \sum_{r=1}^R a_r \circ b_r \circ c_r,
\]
where $a_r$ (resp. $b_r$ and $c_r$) is the $r$-th column of $A$
(resp. $B$ and $C$). The ALS algorithm minimizes the difference
between the original and the reconstructed tensors
\begin{equation}\label{eq:ALSopt}
\min_{A,B,C} \quad \|\mathfrak{X} - (A;B;C)\|,
\end{equation}
where $A\in \re^{I\times R}$, $B \in \re^{J \times R}$, $C\in
\re^{K\times R}$, and $R$ is the rank of the tensor.\\

The ALS approach is a special case of the BCD algorithm in which the
three blocks of variables $A,B,$ and $C$ are cyclically updated. In
each step of the computation when two blocks of variables are held
fixed, the subproblem becomes the quadratic least squares
problem and admits closed form updates (see \cite{TensorReviewKolda}). \\

One of the well-known drawbacks of the ALS algorithm is the {\it
swamp} effect where the objective value remains almost constant
for many iterations before starting to decrease again. Navasca
{\it et al.} in \cite{CPSwampTikhonov} observed that adding a
proximal term in the algorithm could help reducing the swamp effect.
More specifically, at each iteration $r$ the algorithm proposed in
\cite{CPSwampTikhonov} solves the following problem for updating the
variables:
\begin{equation}\label{EQ:CPTikhonov}
\|\mathfrak{X}- (A;B;C)\|^2 + \lambda \|A-A^r\|^2 + \lambda \|B-B^r\|^2 + \lambda \|C-C^r\|^2,
\end{equation}
where $\lambda \in \re$ is a positive constant. As discussed before,
this proximal term has been considered in different optimization contexts
and its convergence has been already showed in
\cite{ProximalBCD}. An interesting numerical observation in \cite{CPSwampTikhonov} is that
decreasing the value of $\lambda$ during the algorithm can noticeably improve
the convergence of the algorithm. Such iterative decrease of $\lambda$ can
be accomplished in a number of different ways. Our numerical experiments show
that the following simple approach to update $\lambda$ can significantly improve
the convergence of the ALS algorithm and substantially reduce the swamp effect:
\begin{equation}\label{EQ:updaterulLambda}
\lambda^r = \lambda_0 + \lambda_1\frac{\|\mathfrak{X} - (A^r;B^r;C^r)\|}{\|\mathfrak{X}\|},
\end{equation}
where $\lambda^r$ is the proximal coefficient $\lambda$ at iteration~$r$. Theorem~\ref{thm:ConvergenceBSUMConvex} implies the convergence is guaranteed even with this update rule of $\lambda$, whereas the convergence result of \cite{ProximalBCD} does not apply in this case since the proximal coefficient is changing during the iterations.\\

Figure~\ref{FIG:TensorALS} shows the performance of different algorithms for the example given in \cite{CPSwampTikhonov} where the tensor~$\mathfrak{X}$ is obtained from the decomposition
\begin{equation} \nonumber
A = \left[
\begin{array}{ccc}
1 & \cos\theta & 0\\
0 & \sin\theta & 1 \\
\end{array}
\right],\quad
B = \left[
\begin{array}{ccc}
3 & \sqrt{2}\cos\theta & 0\\
0 & \sin\theta & 1 \\
0 & \sin\theta & 0 \\
\end{array}
\right],\quad
C = \left[
\begin{array}{ccc}
1 & 0 & 0\\
0 & 1 & 0 \\
0 & 0 & 1 \\
\end{array}
\right].
\end{equation}
The vertical axis is the value of the objective function where the horizontal axis is the iteration number.
In this plot, {\it ALS} is the classical alternating least squares algorithm. The curve for {\it Constant Proximal} shows the performance of the BSUM algorithm when we use the objective function in \eqref{EQ:CPTikhonov} with $\lambda= 0.1$. The curve for {\it Diminishing Proximal} shows the performance of block coordinate descent method on \eqref{EQ:CPTikhonov} where the weight $\lambda$ decreases iteratively according to \eqref{EQ:updaterulLambda} with $\lambda_0 = 10^{-7}, \lambda_1 = 0.1$. The other two curves {\it MBI} and {\it MISUM} correspond to the maximum block improvement algorithm and the MISUM algorithm. In the implementation of the MISUM algorithm, the proximal term is of the form in \eqref{EQ:CPTikhonov} and the weight $\lambda$ is updated based on \eqref{EQ:updaterulLambda}. \\
\begin{figure}[ht!]
 \centering
\includegraphics[width=4in]{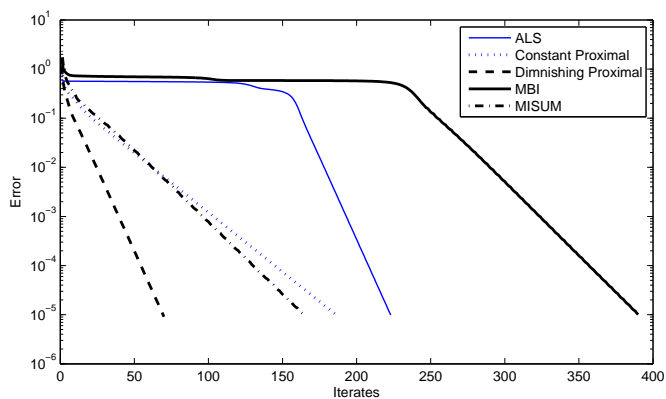}
\caption{Convergence of Different Algorithms}\label{FIG:TensorALS}
\end{figure}

Table~\ref{Tab:AverageALSTensor} represents the average number of iterations required to get an objective value less than $\epsilon = 10^{-5}$ for different algorithms. The average is taken over 1000 Monte-Carlo runs over different initializations. The initial points are generated randomly where the components of the variables $A,B,$ and $C$ are drawn independently from the uniform distribution over the unit interval $[0, 1]$. As it can be seen, adding a diminishing proximal term significantly improves the convergence speed of the ALS algorithm.

\begin{table}
\centering
\begin{tabular}{|c|c|}
  \hline
  {\rm Algorithm} & {\rm Average number of iterations for convergence}\\
  \hline
  {\rm ALS} & 277\\
  \hline
  {\rm Constant Proximal} & 140\\
  \hline
  {\rm Diminishing Proximal} & 78 \\
  \hline
  {\rm MBI} & 572 \\
  \hline
  {\rm MISUM} & 175 \\
  \hline
\end{tabular}
\caption{Average number of iterations for convergence}
\vspace{-0.5cm} \label{Tab:AverageALSTensor}
\end{table}

\subsection{Expectation Maximization Algorithm}
The expectation maximization algorithm (EM) in \cite{EMDempster} is an iterative procedure for maximum likelihood estimation when some of the random variables are unobserved/hidden. Let $w$ be the observed random vector which is used for estimating the value of $\theta$. The maximum likelihood estimate of $\theta$ can be given as
\begin{equation}\label{EQ:EMML}
\hat{\theta}_{\rm ML} = \arg \max_{\theta} \; \ln p(w|\theta).
\end{equation}
Let the random vector $z$ be the hidden/unobserved variable. The EM algorithm starts from an initial estimate $\theta^0$ and generates a sequence $\{\theta^r\}$ by repeating the following steps:
\begin{itemize}
\item E-Step: Calculate $g(\theta,\theta^r) \triangleq \mathbb{E}_{z|w,\theta^r} \{\ln p(w,z|\theta)\}$
\item M-Step: $\theta^{r+1} = \arg \max_\theta \; g(\theta,\theta^r)$
\end{itemize}
The EM-algorithm can be viewed as a special case of SUM algorithm \cite{EMTutorial}. In fact, we are interested in solving the following optimization problem
\[
\min_\theta \quad -\ln p(w|\theta).
\]
The objective function could be written as
\begin{align}
-\ln p(w|\theta) & = -\ln \;\;\mathbb{E}_{z|\theta}\; p(w|z,\theta)\nonumber\\
& = -\ln \;\;\mathbb{E}_{z|\theta} \left[\frac{p(z|w,\theta^r) p(w|z,\theta)}{p(z|w,\theta^r)}\right]\nonumber\\
& = -\ln \;\;\mathbb{E}_{z|w,\theta^r} \left[\frac{p(z|\theta) p(w|z,\theta)}{p(z|w,\theta^r)}\right]\nonumber\\
& \leq - \mathbb{E}_{z|w,\theta^r} \ln\left[\frac{p(z|\theta)p(w|z,\theta)}{p(z|w,\theta^r)}\right]\nonumber\\
& = - \mathbb{E}_{z|w,\theta^r} \ln p(w,z|\theta) + \mathbb{E}_{z|w,\theta^r} \ln p(z|w,\theta^r)\nonumber\\
& \triangleq u(\theta,\theta^r),\nonumber
\end{align}
where the inequality is due to the Jensen's inequality and the third equality follows from a simple change of the order of integration for the expectation. Since $\mathbb{E}_{z|w,\theta^r} \ln p(z|w,\theta^r)$ is not a function of $\theta$, the M-step in the EM-algorithm can be written as
\[
\theta^{r+1} = \arg\max_{\theta} u(\theta,\theta^r).
\]
Furthermore, it is not hard to see that $u(\theta^r,\theta^r) = - \ln p(w|\theta^r)$. Therefore, under the smoothness assumption, Proposition~\ref{lemma:SUMAssumption} implies that Assumption~\ref{AssumptionA} is satisfied. As an immediate consequence, the EM-algorithm is a special case of the SUM algorithm. Therefore, our result implies not only the convergence of the EM-algorithm, but also the convergence of the EM-algorithm with Gauss-Seidel/coordinatewise update rule (under the assumptions of Theorem~\ref{thm:ConvergenceBSUMConvex}). In fact in the block coordinate EM-algorithm (BEM), at each M-step, only one block is updated. More specifically, let $\theta = (\theta_1,\ldots, \theta_n)$ be the unknown parameter. Assume $w$ is the observed vector and $z$ is the hidden/unobserved variable as before. The BEM algorithm starts from an initial point $\theta^0 = (\theta^0_1,\ldots,\theta^0_n)$ and generates a sequence $\{\theta^r\}$ according to the algorithm in Figure~\ref{Fig:BEMAlgorithm}.

\begin{figure*}[t]
\centering
\begin{tabular}{|p{4.7in}|}
\hline
\begin{itemize}
\item [1] \;Initialize with $\theta^0$ and set $r = 0$
\item [2] \; \textbf{repeat}
\item [3] \quad $r = r+1$, $i = r \;{\rm mod}\; n +1$
\item [4]  \quad E-Step: $g_i(\theta_i,\theta^r) = \mathbb{E}_{z|w,\theta^r} \{\ln p(w,z|\theta_1^r,\ldots,\theta_{i-1}^r,\theta_i, \theta_{i+1}^{r},\ldots,\theta_n^{r})\}$
\item [5]  \quad M-Step: $\theta_{i}^{r+1} = \arg \max_{\theta_i} \; g_i(\theta_i,\theta^r)$
\item [6] \; \textbf{until} some convergence criterion is met
\end{itemize}
\\
\hline
\end{tabular}\vspace{1.2em}
\caption{Pseudo code of the BEM algorithm} \label{Fig:BEMAlgorithm}
\end{figure*}
The motivation behind using the BEM algorithm instead of the EM algorithm could be
the difficulties in solving the M-step of EM for the entire set of variables, while solving the same problem per block of variables is easy. To the best of our knowledge, the BEM algorithm and its convergence behavior have not been analyzed before.

\subsection{Concave-Convex Procedure/Difference of  Convex Functions}
A popular algorithm for solving unconstrained problems, which also belongs to the class of successive upper-bound minimization, is the Concave-Convex Procedure (CCCP) introduced in \cite{CCCP}. In CCCP, also known as the difference of convex functions (DC) programming, we consider the unconstrained problem
\[
\min_{x\in \mathbb{R}^m} \; f(x),
\]
where $f(x) = f_{cve}(x) + f_{cvx}(x), \forall\;  x\in \re^m$; where $f_{cve}(\cdot)$ is a concave function and $f_{cvx}(\cdot)$ is convex. The CCCP generates a sequence $\{x^r\}$ by solving the following equation:
\begin{equation} \nonumber
\nabla f_{cvx} (x^{r+1}) = -\nabla f_{cve} (x^r),
\end{equation}
which is equivalent to
\begin{equation} \label{EQ:CCCP}
x^{r+1} = \arg \min_{x} \; g(x,x^r),
\end{equation}
where $g(x,x^r) \triangleq f_{cvx}(x) + (x - x^r)^T \nabla f_{cve}(x^r) + f_{cve}(x^r)$. Clearly, $g(x,x^r)$ is a tight convex upper-bound of $f(x)$ and hence CCCP is a special case of the SUM algorithm and its convergence is guaranteed by Theorem~\ref{thm:ConvergenceSUMConvex} under certain assumptions. Furthermore, if the updates are done in a block coordinate manner, the algorithm becomes a special case of BSUM whose convergence is guaranteed by Theorem~\ref{thm:ConvergenceBSUMConvex}. To the best of our knowledge, the block coordinate version of CCCP  algorithm and its convergence has not been studied before.


\bibliographystyle{IEEEtranS}
\bibliography{IEEEabrv,biblio}

\end{document}